\title{The Geometry of Triangles}
\author{Yining Chen}
\date{May 15, 2022}
\affil{Shandong University \authorcr Email:\ yining\_chen@mail.sdu.edu.cn}
\begin{document}
\maketitle
\large
\begin{abstract}
    In this article we make the concept of a continuous family of triangles precise and prove the moduli functor classifying oriented triangles admits a fine moduli space but the functor classifying non-oriented triangles only admits a coarse moduli space. We hope moduli spaces of triangles can help understand stacks.
\end{abstract}
\begin{center}
    \textbf{One need only understand the stack of triangles to understand stacks. (M. Artin)}
\end{center}
\tableofcontents
\newtheorem{definition}{Definition}[section]
\newtheorem{theorem}[definition]{Theorem}
\newtheorem{plain}[definition]{Proposition}
\newtheorem{lemma}[definition]{Lemma}
\newtheorem{remark}[definition]{Remark}
\newtheorem{example}[definition]{Example}
\newtheorem{exercise}[definition]{Exercise}
\newtheorem{corollary}[definition]{Corollary}
\newtheorem{fact}[definition]{Fact}
\newtheorem{app}[definition]{Application}
\section{The Formalism of Stacks}
In this section we recall some definitions related to stacks and details can be find in \cite{Vis}.

\subsection{Fibered Category}

\ Given a base category \(\mathcal{C}\), a category over \(\mathcal{C}\) actually means a functor \(F:\mathcal{D}\rightarrow \mathcal{C}\). If \(X\in \mathrm{Ob}(\mathcal{C})\), we can define its \textit{fiber} along \(F\) as follows.

\begin{definition}
The \textbf{fiber} of \(X\in \mathrm{Ob}(\mathcal{C})\) along the functor \(F:\mathcal{D}\rightarrow \mathcal{C}\) is a subcategory \(F^{-1}(X)\) consisting of all objects \(Y\in \mathrm{Ob}(\mathcal{D})\) such that \(F(Y)=X\) and morphisms \(f:Y\rightarrow Y'\) in \(\mathcal{D}\) such that \(F(f)=\mathrm{id}_{X}\). We thus say \(f\) is \textbf{lying over} \(\mathrm{id}_{X}\).
\begin{equation}
\notag
\begin{tikzcd}
Y\arrow[r,"f" above]\arrow[d,dash,dotted]&Y'\arrow[d,dash,dotted,"F"]\\
X\arrow[r,"\mathrm{id}_{X}" below] & X
\end{tikzcd}
\end{equation}
\end{definition}

\ The definition above actually defines a subcategory of \(\mathcal{D}\). The slice category \(\mathcal{C}/X\) consists of objects of the form \(X'\rightarrow X\) in \(\mathcal{C}\), and morphisms  \(f:X'\rightarrow X''\) making the following diagram commutative
\begin{equation}
\notag
\begin{tikzcd}
X'\arrow[rr,"f" above] \arrow[dr]&&X''\arrow[ld]\\
&X
\end{tikzcd}
\end{equation}
There is a natural projection \(P_{X}:\mathcal{C}/X\rightarrow \mathcal{C}\) such that \(P_{X}(X'\rightarrow X)=X', P_{X}(f)=f\).

\ A functor between two categories over \(\mathcal{C}\) is a functor \(H:\mathcal{D}\rightarrow \mathcal{E}\), making the following diagram commutative
\begin{equation}
\notag
\begin{tikzcd}
\mathcal{D}\arrow[rr,"H" above] \arrow[dr]&&\mathcal{E}\arrow[ld]\\
&\mathcal{C}
\end{tikzcd}
\end{equation}
We denote the category of categories over \(\mathcal{C}\) by \(\mathbf{Cat}/\mathcal{C}\). 

\begin{theorem}\label{1.2}
There is a fully faithful embedding \(\mathcal{C}\rightarrow \mathbf{Cat}/\mathcal{C}, X\mapsto \mathcal{C}/X\). 
\end{theorem}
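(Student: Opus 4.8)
The plan is to exhibit the functor explicitly and then verify full faithfulness by a Yoneda-style argument that exploits the terminal object of each slice category. First I would pin down the functor on morphisms: a map \(g\colon X\to X'\) in \(\mathcal{C}\) should be sent to the post-composition functor \(g_{*}\colon\mathcal{C}/X\to\mathcal{C}/X'\) carrying an object \((a\colon Y\to X)\) to \((g\circ a\colon Y\to X')\) and leaving the underlying arrow of every morphism unchanged. Since \(g_{*}\) alters neither the underlying object \(Y\) nor the underlying arrow of a morphism, it commutes with the projections \(P_{X}\) and \(P_{X'}\), hence is a genuine morphism in \(\mathbf{Cat}/\mathcal{C}\). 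Functoriality of \(X\mapsto\mathcal{C}/X\) — that \((\mathrm{id}_{X})_{*}=\mathrm{id}_{\mathcal{C}/X}\) and \((h\circ g)_{*}=h_{*}\circ g_{*}\) — then follows immediately from associativity of composition in \(\mathcal{C}\).

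It remains to show that for every pair \(X,X'\) the assignment
\[
\Phi\colon\mathrm{Hom}_{\mathcal{C}}(X,X')\longrightarrow\mathrm{Hom}_{\mathbf{Cat}/\mathcal{C}}(\mathcal{C}/X,\mathcal{C}/X'),\qquad g\mapsto g_{*},
\]
is a bijection. The decisive structural fact is that \(\mathrm{id}_{X}\colon X\to X\) is a terminal object of \(\mathcal{C}/X\), and that any object \((a\colon Y\to X)\) comes with a canonical arrow \(a\) to it. For injectivity I would simply evaluate at the terminal object: \(g_{*}(\mathrm{id}_{X})=(g\colon X\to X')\), so \(g_{*}=h_{*}\) forces \(g=h\).

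For surjectivity — which I expect to be the main obstacle — let \(H\colon\mathcal{C}/X\to\mathcal{C}/X'\) be an arbitrary functor over \(\mathcal{C}\). Because \(P_{X'}\circ H=P_{X}\), the functor \(H\) preserves underlying objects and underlying arrows; in particular \(H(\mathrm{id}_{X})\) has underlying object \(X\), so \(H(\mathrm{id}_{X})=(g\colon X\to X')\) for a unique \(g\in\mathrm{Hom}_{\mathcal{C}}(X,X')\). I then claim \(H=g_{*}\). Given any object \((a\colon Y\to X)\), apply \(H\) to its canonical arrow \(a\) into the terminal object; since \(H\) lies over \(\mathcal{C}\), the image has underlying arrow \(a\) again, now regarded as a morphism in \(\mathcal{C}/X'\) from \(H(a\colon Y\to X)\) to \((g\colon X\to X')\). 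The commutativity defining such a slice morphism reads \(g\circ a=(\text{underlying map of }H(a\colon Y\to X))\), which forces \(H(a\colon Y\to X)=(g\circ a\colon Y\to X')=g_{*}(a\colon Y\to X)\). The identical bookkeeping on a morphism \(f\) of \(\mathcal{C}/X\), using that a morphism in a slice category is determined by its underlying arrow, yields \(H(f)=f=g_{*}(f)\). Hence \(H=g_{*}\) and \(\Phi\) is surjective. The only delicate point throughout is to keep the two layers separated — the underlying objects and arrows in \(\mathcal{C}\) versus the structure maps recording the slice — and to invoke ``over \(\mathcal{C}\)'' precisely when transporting information between them.
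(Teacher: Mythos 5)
Your proof is correct, and it takes a genuinely more explicit route than the paper's. The paper's argument converts an arbitrary functor \(H\colon \mathcal{C}/X\to\mathcal{C}/X'\) over \(\mathcal{C}\) into a natural transformation \(\mathrm{Hom}(-,X)\rightarrow\mathrm{Hom}(-,X')\) (since \(H\) fixes underlying objects and arrows, its action on objects is a family of maps \(\mathrm{Hom}(Z,X)\to\mathrm{Hom}(Z,X')\)) and then cites the Yoneda lemma to produce the unique corresponding \(g\colon X\to X'\). You never form the presheaves: you exploit the fact that \(\mathrm{id}_X\) is terminal in \(\mathcal{C}/X\), read off \(g\) as the underlying arrow of \(H(\mathrm{id}_X)\), and then apply \(H\) to the canonical arrow \(a\) from an arbitrary object \((a\colon Y\to X)\) to the terminal object; the constraint \(P_{X'}\circ H=P_X\) forces the image to have underlying arrow \(a\) again, and the slice-morphism condition \(g\circ a=\text{(structure map of }H(a\colon Y\to X))\) pins down \(H\) on objects, with the same bookkeeping handling morphisms. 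This is of course the Yoneda argument unwound — evaluating at \(\mathrm{id}_X\) is exactly the Yoneda trick — but your version is self-contained and proves the sharper on-the-nose equality \(H=g_*\), supplying two verifications that the paper's proof glosses over: that the object-assignment of \(H\) is natural in \(Z\) (your computation with the canonical arrow to the terminal object is precisely this check), and that the bijection produced by Yoneda is actually inverse to \(g\mapsto g_*\). What the paper's route buys is brevity and the conceptual point that slice categories behave like representable objects in \(\mathbf{Cat}/\mathcal{C}\); what yours buys is a complete, elementary verification with an explicit inverse to the map \(g\mapsto g_*\), at the cost of somewhat more notation-tracking between the two layers.
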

\begin{proof}
We should prove this actually defines a functor first. Given any morphism \(f:X\rightarrow Y\) in \(\mathcal{C}\), there is a natural functor \(f_{*}:\mathcal{C}/X\rightarrow \mathcal{C}/Y\) such that
\[f_{*}(Z\stackrel{g}{\rightarrow }X)=f\circ g: Z\rightarrow Y,\ \ \ \ \  f_{*}(Z\stackrel{h}{\rightarrow}Z')=Z\stackrel{h}{\rightarrow}Z'\]
Note that the category \(\mathcal{C}/X\) is over \(\mathcal{C}\), and the functor \(f_{*}\) can make such a diagram commutative which means \(P_{Y}\circ f_{*}=P_{X}\).

\ Now we prove this embedding is fully faithful. Given any functor \(H:\mathcal{C}/X\rightarrow \mathcal{C}/Y\) over \(\mathcal{C}\), \(P_{Y}\circ H=P_{X}\), then \(H(Z\rightarrow X)\) will be some \(Z\rightarrow Y\) and 
\[H(Z\rightarrow Z')=Z\rightarrow Z'\]
Hence \(H\) is actually a natural transformation \(\mathrm{Hom}(-,X)\rightarrow \mathrm{Hom}(-,Y)\), which will correspond uniquely to a morphism \(X\rightarrow Y\) by Yoneda's lemma. Therefore
\[\mathrm{Fun}_{\mathcal{C}}(\mathcal{C}/X,\mathcal{C}/Y)\cong \mathrm{Hom}_{\mathcal{C}}(X,Y)\] 
\end{proof}

\ Via this fully faithful embedding, we can identify \(\mathcal{C}\) with a full subcategory of \(\mathbf{Cat}/\mathcal{C}\).

\ We know in algebraic topology the definition of \textit{groupoids}, which are categories with all morphisms being isomorphisms. For any category over \(\mathcal{C}\), \(F:\mathcal{D}\rightarrow \mathcal{C}\), given a morphism \(f:X\rightarrow Y\) in \(\mathcal{C}\), there may not exist a morphism \(f':X'\rightarrow Y'\) in \(\mathcal{D}\) such that \(F(f')=f\). Even if \(f'\) exists, it may not be unique. This motivates us to define the concept of \textit{groupoid fibration}.

\begin{definition}\label{1.3}
A category over \(\mathcal{C}\), \(F:\mathcal{D}\rightarrow \mathcal{C}\) is a \textbf{groupoid fibration} or called \textbf{fibered in groupoids}, if for any \(f:X\rightarrow Y\) in \(\mathcal{C}\) and any \(Y'\in \mathrm{Ob}(\mathcal{D})\) lying over \(Y\), there is a unique \(f':X'\rightarrow Y'\) in \(\mathcal{D}\) lying over \(f\). 
 \begin{equation}
 \notag
 \begin{tikzcd}
 X'\arrow[r," \exists!\ f'" above] \arrow[d,dash,dotted] & Y'\arrow[d,dash,dotted,"F"right]\\
 X\arrow[r,"f" below]& Y
 \end{tikzcd}
 \end{equation}
\end{definition}

\ The meaning of uniqueness is as follows: if \(f'':X''\rightarrow Y'\) in \(\mathcal{D}\) is also lying over \(f\), then there is a unique isomorphism \(\alpha:X''\rightarrow X'\) lying over \(\mathrm{id}_{X}\), such that \(f'\circ \alpha=f''\).
 \begin{equation}
 \notag
 \begin{tikzcd}
 X'' \arrow[d, dash, dotted] \arrow[r,"\exists!\  \alpha" above,"\cong" below] \arrow[rr, bend left, "f''" above] &   X' \arrow[r,"f'" above] \arrow[d, dash, dotted] & Y' \arrow[d,dash,dotted,"F" right]\\
X\arrow[r,"\mathrm{id}" above] \arrow[rr, bend right, "f"below] & X\arrow[r,"f" above]& Y
\end{tikzcd}
\end{equation}

\ Why this kind of functors is called a \textit{groupoid fibration}? It's mainly due to the following theorem

\begin{theorem}\label{1.4}
Assume \(F:\mathcal{D}\rightarrow \mathcal{C}\) is a groupoid fibration, and then for any isomorphism \(f:X\rightarrow Y\) in \(\mathcal{C}\), \(f':X'\rightarrow Y'\) lying over \(f\) will also be an isomorphism in \(\mathcal{D}\). Hence the fiber \(F^{-1}(X)\) is a groupoid.
\end{theorem}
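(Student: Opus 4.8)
The plan is to first establish a lemma---that every lift of an isomorphism admits a one-sided (right) inverse---and then bootstrap it, by applying it twice, into a genuine two-sided inverse. Write \(g=f^{-1}\colon Y\to X\) for the inverse of the base isomorphism \(f\colon X\to Y\), and let \(f'\colon X'\to Y'\) lie over \(f\). First I would invoke the groupoid fibration property (Definition \ref{1.3}) for the morphism \(g\) and the object \(X'\), which lies over the target \(X\) of \(g\); this produces a lift \(g'\colon Z'\to X'\) over \(g\). The composite \(f'\circ g'\colon Z'\to Y'\) then lies over \(f\circ g=\mathrm{id}_Y\), exactly as does \(\mathrm{id}_{Y'}\). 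Comparing these two lifts of \(\mathrm{id}_Y\) with common target \(Y'\) through the uniqueness clause of Definition \ref{1.3} (taking \(\mathrm{id}_{Y'}\) as the reference lift), I obtain an isomorphism \(\alpha\colon Z'\to Y'\) over \(\mathrm{id}_Y\) with \(f'\circ g'=\alpha\). Setting \(h:=g'\circ\alpha^{-1}\colon Y'\to X'\) yields a morphism over \(g\) satisfying \(f'\circ h=\mathrm{id}_{Y'}\); this is the promised right inverse.

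Next I would upgrade \(h\) to a genuine inverse. Since \(h\) lies over the isomorphism \(g\) (whose own inverse is \(f\)), the very same construction applied to \(h\) in place of \(f'\) furnishes a morphism \(h'\colon X'\to Y'\) over \(f\) with \(h\circ h'=\mathrm{id}_{X'}\). At this point a purely formal manipulation finishes the argument: from \(f'\circ h=\mathrm{id}_{Y'}\) and \(h\circ h'=\mathrm{id}_{X'}\) one gets \(f'=f'\circ\mathrm{id}_{X'}=f'\circ(h\circ h')=(f'\circ h)\circ h'=h'\), and therefore \(h\circ f'=h\circ h'=\mathrm{id}_{X'}\). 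Together with \(f'\circ h=\mathrm{id}_{Y'}\), this shows \(h\) is a two-sided inverse, so \(f'\) is an isomorphism in \(\mathcal{D}\).

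For the final clause, let \(\phi\colon A\to B\) be any morphism of the fiber \(F^{-1}(X)\), so that \(\phi\) lies over \(\mathrm{id}_X\). Because \(\mathrm{id}_X\) is an isomorphism, the result just proved shows \(\phi\) is an isomorphism in \(\mathcal{D}\); and since \(F(\phi^{-1})=F(\phi)^{-1}=\mathrm{id}_X\), its inverse again lies over \(\mathrm{id}_X\) and hence belongs to \(F^{-1}(X)\). Thus every morphism of the fiber is invertible within the fiber, i.e.\ \(F^{-1}(X)\) is a groupoid.

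The step I expect to be the main obstacle is the passage from a one-sided to a two-sided inverse. The uniqueness in Definition \ref{1.3} is only phrased up to isomorphism, so a single application of the lifting property lets me cancel on the right but not on the left; I cannot directly conclude \(h\circ f'=\mathrm{id}_{X'}\) by uniqueness, because \(h\circ f'\) is not yet known to be invertible (it is a priori only an idempotent over \(\mathrm{id}_X\)). The device of applying the right-inverse construction a second time---to \(h\), using that \(g\) is itself an isomorphism---together with the elementary observation that a morphism possessing both a left inverse and a right inverse is invertible, is what circumvents this and avoids any appeal to a cartesian universal property stronger than the one the paper actually states.
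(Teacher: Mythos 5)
Your proof is correct and takes essentially the same route as the paper: lift \(g=f^{-1}\) against \(X'\), compare \(f'\circ g'\) with \(\mathrm{id}_{Y'}\) via the uniqueness clause of Definition \ref{1.3} to get that the composite is an isomorphism, apply the same construction a second time in the opposite direction, and finish with the elementary fact that a morphism with both a left and a right inverse is invertible. The only difference is presentational: you normalize \(\alpha\) away to get a strict right inverse \(h\) and reuse the construction as a lemma applied to \(h\), whereas the paper keeps the auxiliary objects \(Y''\), \(X''\) and first deduces that \(g'\) itself is an isomorphism; these are the same two applications of the lifting/uniqueness property.
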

\begin{proof}
Assume \(f'\) is lying over an isomorphism \(f\) in \(\mathcal{C}\), and \(f\) has the inverse \(g:Y\rightarrow X\), such that \(f\circ g=\mathrm{id}_{Y},g\circ f=\mathrm{id}_{X}\). For \(X'\in \mathrm{Ob}(\mathcal{D})\) is lying over \(X\), there is a unique \(g':Y''\rightarrow X'\) lying over \(g\).
\begin{equation}
\notag
\begin{tikzcd}
 Y'' \arrow[d, dash, dotted] \arrow[r,"g'" above] \arrow[rr, bend left, "f'\circ g'" above] &   X' \arrow[r,"f'" above] \arrow[d, dash, dotted] & Y' \arrow[d,dash,dotted]\\
Y\arrow[r,"g" above] \arrow[rr, bend right, "\mathrm{id}_{Y}"below] & X\arrow[r,"f" above]& Y
\end{tikzcd}
\end{equation}

\ Hence \(f'\circ g'\) is lying over \(\mathrm{id}_{Y}\). But \(\mathrm{id}_{Y^{'}}:Y'\rightarrow Y'\) is also lying over \(\mathrm{id}_{Y}\), there is a unique isomorphism \(\alpha:Y''\rightarrow Y'\) lying over \(\mathrm{id}_{Y}\) such that \(\mathrm{id}_{Y^{'}}\circ \alpha =f'\circ g'\), which means \(f'\circ g'=\alpha\) is an isomorphism. Hence \(g'\) has a left inverse and \(f'\) has a right inverse. Dually consider
 \begin{equation}
 \notag
 \begin{tikzcd}
 X'' \arrow[d, dash, dotted] \arrow[r,"f''" above] \arrow[rr, bend left, "g'\circ f''" above] &   Y'' \arrow[r,"g'" above] \arrow[d, dash, dotted] & X'\arrow[d,dash,dotted]\\
X\arrow[r,"f" above] \arrow[rr, bend right, "\mathrm{id}_{X}"below] & Y\arrow[r,"g" above]& X
 \end{tikzcd}
 \end{equation}

\ The same argument will imply \(f''\) has a left inverse and \(g'\) has a right inverse. Then \(g'\) has a left inverse and a right inverse, hence an isomorphism. But \(f'\circ g'\) is an isomorphism and this means \(f'\) will be an isomorphism.  
\end{proof}

\ The groupoid fibration requires the lifting of a morphism in the base category is unique. Sometimes this condition is hard to check. In the following we introduce the concept of \textit{Cartesian arrows}.

\begin{definition}
\(F:\mathcal{D}\rightarrow \mathcal{C}\) is a category over \(\mathcal{C}\). A morphism \(f':X'\rightarrow Y'\) in \(\mathcal{D}\) lying over \(f:X\rightarrow Y\) in \(\mathcal{C}\) is \textbf{Cartesian}, if for any \(g':Z'\rightarrow Y'\) lying over \(g:Z\rightarrow Y\) and for any \(h:Z\rightarrow X\) with \(f\circ h=g\), there will exist a unique arrow \(h':Z'\rightarrow X'\) lying over \(h\) such that \(f'\circ h'=g'\).
\begin{equation}
\notag
\begin{tikzcd}
 Z' \arrow[d, dash, dotted] \arrow[r,dotted,"\exists!\  h' " above] \arrow[rr, bend left, "g'" above] &   X' \arrow[r,"f'" above] \arrow[d, dash, dotted] & Y' \arrow[d,dash,dotted]\\
Z\arrow[r,"h" above] \arrow[rr, bend right, "g"below] & X\arrow[r,"f" above]& Y
\end{tikzcd}
\end{equation}
\end{definition}

\begin{definition}
A \textbf{fibered category} over \(\mathcal{C}\), is a functor \(F:\mathcal{D}\rightarrow \mathcal{C}\) such that for any morphism \(f:X\rightarrow Y\) in \(\mathcal{C}\) and any object \(Y'\in \mathrm{Ob}(\mathcal{D})\) lying over \(Y\), there is a Cartesian morphism \(f':X'\rightarrow Y'\) in \(\mathcal{D}\) lying over \(f\).
\end{definition}

\ Naturally there is a question. What's the connection between fibered categories and groupoid fibrations? In fact, every fibered category admits a subcategory which is a groupoid fibration.

\begin{lemma}
Given a category over \(\mathcal{C}\) say \(F:\mathcal{D}\rightarrow \mathcal{C}\), isomorphisms in \(\mathcal{D}\) are Cartesian and the composition of Cartesian morphisms is Cartesian.
\end{lemma}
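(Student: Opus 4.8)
The plan is to handle the two assertions separately, in each case reducing everything to the defining universal property of a Cartesian arrow.

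For the first claim, suppose \(f':X'\to Y'\) is an isomorphism in \(\mathcal{D}\), lying over \(f=F(f')\). Since functors preserve isomorphisms, \(f\) is an isomorphism in \(\mathcal{C}\). To verify the Cartesian condition I would take any \(g':Z'\to Y'\) lying over \(g:Z\to Y\) and any \(h:Z\to X\) with \(f\circ h=g\), and observe that the only candidate lift is forced: setting \(h'=(f')^{-1}\circ g'\) gives \(f'\circ h'=g'\) immediately. The one point requiring a check is that \(h'\) genuinely lies over \(h\), which follows from \(F(h')=f^{-1}\circ g=h\) (using \(f\circ h=g\)) by functoriality of \(F\). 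Uniqueness is then automatic, since any \(h''\) with \(f'\circ h''=g'\) must equal \((f')^{-1}\circ g'\) after composing on the left with \((f')^{-1}\).

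For the second claim, let \(f':X'\to Y'\) and \(g':Y'\to Z'\) be Cartesian over \(f:X\to Y\) and \(g:Y\to Z\) respectively, and I want to show \(g'\circ f'\) is Cartesian over \(g\circ f\). The strategy is to apply the universal property twice, in the right order. Given \(k':W'\to Z'\) over \(k:W\to Z\) and \(h:W\to X\) with \((g\circ f)\circ h=k\), I would first feed the data into the Cartesian property of \(g'\): since \(g\circ(f\circ h)=k\), there is a unique \(m':W'\to Y'\) over \(f\circ h\) with \(g'\circ m'=k'\). Then I would feed \(m'\) into the Cartesian property of \(f'\): since \(m'\) lies over \(f\circ h\) and \(f\circ h=f\circ h\) trivially, there is a unique \(h':W'\to X'\) over \(h\) with \(f'\circ h'=m'\). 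Composing yields \((g'\circ f')\circ h'=g'\circ m'=k'\), as required.

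The uniqueness of \(h'\) is where I expect the bookkeeping to be most delicate, and it is the main obstacle. The argument is to run the two uniqueness clauses in sequence: if \(h''\) also lies over \(h\) and satisfies \((g'\circ f')\circ h''=k'\), then \(f'\circ h''\) lies over \(f\circ h\) and satisfies \(g'\circ(f'\circ h'')=k'\), so uniqueness for \(g'\) forces \(f'\circ h''=m'\); then uniqueness for \(f'\) forces \(h''=h'\). The remaining verifications—that each constructed arrow lies over the intended morphism in \(\mathcal{C}\)—are routine functoriality, but one must keep careful track of which arrow lies over which, since it is the composite \(f\circ h\) (not \(h\) or \(k\)) that governs the first application.
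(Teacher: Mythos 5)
Your proof is correct and follows essentially the same route as the paper: for composition, apply the Cartesian property of \(g'\) to produce the unique intermediate arrow over \(f\circ h\), then the Cartesian property of \(f'\) to lift it over \(h\). You are somewhat more explicit than the paper, which dismisses the isomorphism case as obvious and leaves the two-step uniqueness argument implicit, but the substance is identical.
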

\begin{proof}
The first part is obvious and we only prove the second part.
 
\ We assume \(f':X'\rightarrow Y',g':Y'\rightarrow Z'\) are Cartesian arrows lying over \(f:X\rightarrow Y, g:Y\rightarrow Z\) respectively. Given any morphism \(u':U'\rightarrow Z'\) lying over \(u:U\rightarrow Z\), and a morphism \(h:U\rightarrow X\) such that \(u=g\circ f\circ h\), we should find a unique morphism \(h':U'\rightarrow X'\) lying over \(h\) such that \(u'=g'\circ f'\circ h'\).
\[\begin{tikzcd}
U' \arrow[r, "\exists!\ h'", dotted] \arrow[d, no head, dotted] \arrow[rrr, "u'", bend left] & X' \arrow[r, "f'"] \arrow[d, no head, dotted] & Y' \arrow[r, "g'"] \arrow[d, no head, dotted] & Z' \arrow[d, no head, dotted] \\
U \arrow[r, "h"] \arrow[rrr, "u"', bend right]                                               & X \arrow[r, "f"]                              & Y \arrow[r, "g"]                              & Z                            
\end{tikzcd}\]

\ Since \(g'\) is Cartesian, there is a unique morphism \(v:U'\rightarrow Y'\) lying over \(f\circ h\) such that \(u'=g'\circ v\). Since \(f'\) is Cartesian, there is a unique morphism \(h':U'\rightarrow X'\) lying over \(h\) such that \(v=f'\circ h'\).
\end{proof}

\ Therefore every fibered category \(F:\mathcal{D}\rightarrow \mathcal{C}\) admits a subcategory \(\mathcal{D}_{\mathrm{Car}}\) consisting of objects the same as \(\mathcal{D}\) and morphisms being Cartesian. This subcategory is a groupoid fibration.

\begin{theorem}\label{1.8}
The subcategory \(\mathcal{D}_{\mathrm{Car}}\) defined above is a groupoid fibration. 
\end{theorem}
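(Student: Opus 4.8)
The plan is to verify the two clauses of Definition \ref{1.3} for the restricted functor $F|_{\mathcal{D}_{\mathrm{Car}}}:\mathcal{D}_{\mathrm{Car}}\to\mathcal{C}$, drawing existence from the fibered-category hypothesis and uniqueness from the universal property of Cartesian arrows. I would first record that $\mathcal{D}_{\mathrm{Car}}$ really is a subcategory over $\mathcal{C}$: identities are isomorphisms and hence Cartesian, the preceding lemma says Cartesian arrows compose, and therefore the restriction of $F$ is a genuine functor into $\mathcal{C}$.

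For existence, I would fix a morphism $f:X\to Y$ in $\mathcal{C}$ and an object $Y'$ lying over $Y$. Since $F:\mathcal{D}\to\mathcal{C}$ is a fibered category, there is a Cartesian arrow $f':X'\to Y'$ over $f$, and by definition this $f'$ is a morphism of $\mathcal{D}_{\mathrm{Car}}$; so $f$ admits a lift inside the subcategory with the prescribed target $Y'$.

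For the uniqueness clause, suppose $f'':X''\to Y'$ is another morphism of $\mathcal{D}_{\mathrm{Car}}$ lying over $f$, so that $f''$ is also Cartesian. I would construct the required isomorphism by playing the two universal properties against one another. Applying the Cartesian property of $f'$ to the data $g'=f''$, $g=f$, $h=\mathrm{id}_X$ (legitimate since $f\circ\mathrm{id}_X=f$) gives a unique $\alpha:X''\to X'$ over $\mathrm{id}_X$ with $f'\circ\alpha=f''$; symmetrically, the Cartesian property of $f''$ gives a unique $\beta:X'\to X''$ over $\mathrm{id}_X$ with $f''\circ\beta=f'$. Then $\alpha\circ\beta$ lies over $\mathrm{id}_X$ and satisfies $f'\circ(\alpha\circ\beta)=f''\circ\beta=f'$, which are exactly the two properties enjoyed by $\mathrm{id}_{X'}$; the uniqueness half of the Cartesian property of $f'$ then forces $\alpha\circ\beta=\mathrm{id}_{X'}$, and the reversed argument gives $\beta\circ\alpha=\mathrm{id}_{X''}$. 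Thus $\alpha$ is an isomorphism over $\mathrm{id}_X$, hence Cartesian and a morphism of $\mathcal{D}_{\mathrm{Car}}$, and its uniqueness is already built into the Cartesian property of $f'$.

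The existence step and the closure of $\mathcal{D}_{\mathrm{Car}}$ under composition are immediate from the hypotheses and the lemma, so I expect the only real content to lie in the uniqueness argument. The one point that needs care is the cancellation $\alpha\circ\beta=\mathrm{id}_{X'}$: rather than attempting to invert $\alpha$ directly, one must invoke the uniqueness clause of the Cartesian definition with $g'=f'$ and $h=\mathrm{id}_X$, and recognize $\mathrm{id}_{X'}$ as the competing lift that the universal property must coincide with.
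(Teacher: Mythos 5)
Your proposal is correct and follows essentially the same route as the paper: existence of the lift comes directly from the fibered-category hypothesis, and uniqueness is obtained by playing the universal properties of the two Cartesian lifts against each other (your $\alpha,\beta$ are the paper's $h,h'$), with the cancellation $\alpha\circ\beta=\mathrm{id}_{X'}$ forced by the uniqueness clause of the Cartesian property exactly as in the paper. Your additional preliminary check that $\mathcal{D}_{\mathrm{Car}}$ is a genuine subcategory (identities and composites Cartesian) is a tidy inclusion of what the paper relegates to the remark preceding the theorem.
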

\begin{proof}
Given a morphism \(f:X\rightarrow Y\) in \(\mathcal{C}\) and an object \(Y'\in \mathrm{Ob}(\mathcal{D})\) lying over \(Y\), there is a Cartesian morphism \(f':X'\rightarrow Y'\) in \(\mathcal{D}_{\mathrm{Car}}\) lying over \(f\). We need to prove \(f'\) is unique in the sense of Definition \ref{1.3}.
 
\ If \(f'':X''\rightarrow Y'\) is also a Cartesian morphism lying over \(f\)
\begin{equation}
\notag
 \begin{tikzcd}
 X'' \arrow[d, dash, dotted] \arrow[r,dotted,"\exists!\  h " above] \arrow[rr, bend left, "f''" above] &   X' \arrow[r,"f'" above] \arrow[d, dash, dotted] & Y' \arrow[d,dash,dotted]\\
X\arrow[r,"\mathrm{id}_{X}" above] \arrow[rr, bend right, "f"below] & X\arrow[r,"f" above]& Y
 \end{tikzcd}
\end{equation}

\ There is a unique arrow \(h:X''\rightarrow X'\) lying over \(\mathrm{id}_{X}\) such that \(f'\circ h=f''\). Now we only need to prove \(h\) is an isomorphism. Dually there is a unique arrow \(h':X'\rightarrow X''\) lying over \(\mathrm{id}_{X}\) such that \(f''\circ h'=f'\). Then \(f'\circ h\circ h'=f'\). Since \(f'\) is Cartesian and \(h\circ h'\) is lying over \(\mathrm{id}_{X}\), which means \(h\circ h'\) is unique, then \(h\circ h'=\mathrm{id}_{X'}\). Dually \(h'\circ h=\mathrm{id}_{X''}\). \(h\) is an isomorphism especially a Cartesian map in \(\mathcal{D}_{\mathrm{Car}}\).
\end{proof}

\ Conversely that every groupoid fibration is a fibered category is easy to prove. According to the discussion above, to some degree a fibered category is equivalent to a groupoid fibration, because in a fibered category we always concern with Cartesian arrows. And there is another theorem connecting them.

\begin{corollary}\label{1.9}
A fibered category \(F:\mathcal{D}\rightarrow \mathcal{C}\) is a groupoid fibration iff all of its fibers are groupoids.
\end{corollary}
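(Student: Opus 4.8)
The plan is to prove the two implications separately and to notice that one of them is already contained in the earlier results. For the forward direction, assume $F$ is a groupoid fibration; then Theorem \ref{1.4} literally states that each fiber $F^{-1}(X)$ is a groupoid, so there is nothing left to do. All the real content lies in the converse.

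So assume $F$ is a fibered category all of whose fibers are groupoids, and aim to show it is a groupoid fibration. My strategy is to reduce to Theorem \ref{1.8} by proving the sharper statement that, under this hypothesis, \emph{every} morphism of $\mathcal{D}$ is already Cartesian, so that $\mathcal{D}=\mathcal{D}_{\mathrm{Car}}$. To this end, take an arbitrary morphism $\phi:X'\to Y'$ lying over some $f:X\to Y$. Since $F$ is fibered, there exists a Cartesian arrow $\psi:X''\to Y'$ lying over the same $f$. Applying the universal property of the Cartesian arrow $\psi$ to the morphism $\phi$ (viewed as lying over $f$) together with the trivial factorization $f=f\circ\mathrm{id}_X$, I obtain a unique morphism $u:X'\to X''$ lying over $\mathrm{id}_X$ such that $\psi\circ u=\phi$.

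The key observation is that $u$ lies over $\mathrm{id}_X$, so it is a morphism in the fiber $F^{-1}(X)$; by hypothesis this fiber is a groupoid, hence $u$ is an isomorphism there, and therefore an isomorphism in $\mathcal{D}$. By the lemma stating that isomorphisms are Cartesian and that composites of Cartesian arrows are Cartesian, the factorization $\phi=\psi\circ u$ exhibits $\phi$ as a composite of two Cartesian arrows, so $\phi$ itself is Cartesian. Since $\phi$ was arbitrary, every morphism of $\mathcal{D}$ is Cartesian, i.e.\ $\mathcal{D}=\mathcal{D}_{\mathrm{Car}}$, and Theorem \ref{1.8} immediately yields that $F$ is a groupoid fibration. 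The only mildly delicate step is the factorization of $\phi$ through the chosen Cartesian lift; once the universal property of Cartesian morphisms produces the comparison map $u$, the groupoid hypothesis turns $u$ into an isomorphism and everything else is formal.
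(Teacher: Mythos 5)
Your proof is correct, and it takes a slightly but genuinely different route from the paper's. The forward direction is handled identically in both (it is exactly Theorem \ref{1.4}). For the converse, the paper does not pass through the statement ``every morphism is Cartesian'': it simply reruns the uniqueness argument from the proof of Theorem \ref{1.8}, now for an arbitrary second lift \(f'':X''\rightarrow Y'\) that need not be Cartesian --- the Cartesian lift \(f'\) exists by fiberedness, the comparison map \(h\) over \(\mathrm{id}_X\) with \(f'\circ h=f''\) exists and is unique by the Cartesian property of \(f'\), and the groupoid hypothesis on the fiber makes \(h\) an isomorphism, which is precisely the uniqueness clause of Definition \ref{1.3}. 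You instead promote the same factorization \(\phi=\psi\circ u\) into the sharper intermediate claim that \emph{every} arrow of \(\mathcal{D}\) is Cartesian, using the lemma that isomorphisms are Cartesian and that Cartesian arrows compose, so that \(\mathcal{D}=\mathcal{D}_{\mathrm{Car}}\) and Theorem \ref{1.8} applies as a black box. The crucial step --- the unique comparison morphism over \(\mathrm{id}_X\) lies in a fiber and is therefore invertible --- is the same in both arguments, so the proofs are close cousins; the paper's version is more economical (it verifies the definition directly), while yours costs two extra citations but buys the standard and independently useful fact that a fibered category with groupoid fibers has only Cartesian morphisms.
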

\begin{proof}
The part of ``\(\Rightarrow\)" is the Theorem \ref{1.4}. The part of ``\(\Leftarrow\)" comes from the proof of Theorem \ref{1.8} above where \(h\) is automatically an isomorphism by definition.
\end{proof}

\ Why we prefer the concept of fibered categories? Actually we can view a fibered category as a functor in the sense of 2-categories, where roughly speaking the fibered category \(\mathcal{F}:\mathcal{C}\rightarrow \mathbf{Cat}\) sends objects in \(\mathcal{C}\) to categories. And in general, a category is easier to find than a goupoid.

\begin{definition}\label{1.10}
A \textbf{pseudo-functor} \(\Phi\) on a category \(\mathcal{C}\) is defined as follows
\item [(1)] For each object \(S\in \mathrm{Ob}(\mathcal{C})\), \(\Phi S\) is a category.
\item[(2)] For each arrow \(f:T\rightarrow S\) in \(\mathcal{C}\), \(\Phi_f=f^{*}:\Phi S\rightarrow \Phi T\) is a funcotr.
\item[(3)] For each object \(S\in \mathrm{Ob}(\mathcal{C})\) there is a natural isomorphism \(\epsilon_S:\mathrm{id}_{S}^{*}\xrightarrow{\cong}\mathrm{id}_{\Phi S}\).
\item[(4)] For each pair of arrows \(Q\xrightarrow{f}T\xrightarrow{g}S\), there is a natural isomorphism \(\alpha_{f,g}:f^*\circ g^*\xrightarrow{\cong} (g\circ f)^*\). \(\alpha_{f,g}\) is canonical in the sense that
\begin{align*}
    \alpha_{\mathrm{id},g}=\epsilon\circ g^*:\mathrm{id}^*\circ g^*\xrightarrow{\cong}g^{*}\\
    \alpha_{f,\mathrm{id}}=f^*\circ \epsilon:f^*\circ \mathrm{id}^*\xrightarrow{\cong} f^*
\end{align*}
and the following diagram is commutative
\[\begin{tikzcd}
f^*g^*h^* \arrow[d, "{f^*\circ\alpha_{g,h}}"'] \arrow[r, "{\alpha_{f,g}\circ h^*}"] & (gf)^*h^* \arrow[d, "{\alpha_{gf,h}}"] \\
f^*(hg)^* \arrow[r, "{\alpha_{f,hg}}"']                                             & (hgf)^*                               
\end{tikzcd}\]
\end{definition}

\begin{theorem}\label{1.11}
On a base category \(\mathcal{C}\), pseudo-functors and fibered categories are equivalent up to isomorphism.
\end{theorem}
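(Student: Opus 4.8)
The plan is to exhibit explicit constructions in both directions and then check the two passages are mutually quasi-inverse. First I would describe the \emph{Grothendieck construction}, sending a pseudo-functor \(\Phi\) to a fibered category. Given \(\Phi\), I would build a category \(\mathcal{D}_{\Phi}\) whose objects are pairs \((S,x)\) with \(S\in\mathrm{Ob}(\mathcal{C})\) and \(x\in\mathrm{Ob}(\Phi S)\), and whose morphisms \((T,y)\to(S,x)\) are pairs \((f,\phi)\) consisting of an arrow \(f:T\to S\) in \(\mathcal{C}\) together with an arrow \(\phi:y\to f^{*}x\) in \(\Phi T\). The composite of \((g,\psi):(U,z)\to(T,y)\) with \((f,\phi):(T,y)\to(S,x)\) would be defined as \((f\circ g,\ \alpha_{g,f}(x)\circ g^{*}(\phi)\circ\psi)\), where \(\alpha_{g,f}:g^{*}f^{*}\xrightarrow{\cong}(f\circ g)^{*}\) is the structure isomorphism from Definition \ref{1.10}. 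The projection \(F:\mathcal{D}_{\Phi}\to\mathcal{C}\) sends \((S,x)\mapsto S\) and \((f,\phi)\mapsto f\); I would then show that \((f,\phi)\) is Cartesian precisely when \(\phi\) is an isomorphism in \(\Phi T\), so that \((f,\mathrm{id}_{f^{*}x}):(T,f^{*}x)\to(S,x)\) is a Cartesian lift of \(f\) and hence \(F\) is a fibered category.

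For the reverse passage I would start from a fibered category \(F:\mathcal{D}\to\mathcal{C}\) and choose a \emph{cleavage}, that is, for each \(f:T\to S\) and each object \(x'\) lying over \(S\) a distinguished Cartesian arrow \(f^{*}x'\to x'\) over \(f\) (this step invokes a choice). I would set \(\Phi S:=F^{-1}(S)\), define the pullback functor \(f^{*}:F^{-1}(S)\to F^{-1}(T)\) on morphisms via the universal property of Cartesian arrows, and produce the structure isomorphisms \(\epsilon_{S}\) and \(\alpha_{f,g}\) from the uniqueness-up-to-unique-isomorphism of Cartesian lifts, which is exactly the content extracted in the proof of Theorem \ref{1.8} together with the lemma that composites of Cartesian arrows are Cartesian. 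Finally I would check the two constructions are mutually quasi-inverse: extracting a pseudo-functor from \(\mathcal{D}_{\Phi}\) (using the natural cleavage by identities) recovers \(\Phi\) up to natural isomorphism, while applying the Grothendieck construction to the pseudo-functor read off a fibered category \(\mathcal{D}\) returns a category equivalent to \(\mathcal{D}\) over \(\mathcal{C}\).

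The hard part will be the coherence bookkeeping rather than any single conceptual move. In the forward direction, verifying that composition in \(\mathcal{D}_{\Phi}\) is associative forces one to unwind the commuting square of Definition \ref{1.10}(4): the two re-associations of a composable triple of arrows in \(\mathcal{C}\) must agree, and this is precisely where the pentagon-type condition on the \(\alpha\)'s is consumed, just as the unit condition on \(\epsilon\) is what makes \((\mathrm{id},\mathrm{id})\) an identity. In the reverse direction the same diagram must instead be \emph{produced}, so one must prove that the cleavage-induced \(\alpha_{f,g}\) satisfy the coherence square, which again reduces to the uniqueness of Cartesian factorizations. I would also stress that the word ``isomorphism'' in the statement is necessarily weak: because the cleavage is non-canonical, a fibered category determines a pseudo-functor only up to isomorphism, which is why the asserted equivalence cannot be upgraded to an equality on the nose.
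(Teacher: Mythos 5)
Your proposal is correct and takes essentially the same route as the paper's sketch: the Grothendieck construction \((S,x),(f,\phi:y\to f^{*}x)\) with composition twisted by \(\alpha\) in one direction (with associativity consumed by axiom (4) of Definition \ref{1.10} and the unit handled by \(\epsilon\), exactly as in the paper), and cleavage extraction with \(\epsilon\), \(\alpha_{f,g}\) produced from uniqueness of Cartesian lifts in the other. Your extra observations --- that \((f,\phi)\) is Cartesian precisely when \(\phi\) is an isomorphism, and that one should verify the two constructions are mutually quasi-inverse --- are correct refinements of details the paper leaves implicit, not a different method.
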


\begin{proof}
At first we associate every fibered category \(F:\mathcal{D}\rightarrow \mathcal{C}\) with a pseudo-functor. For any object \(S\in \mathrm{Ob}(\mathcal{C})\), \(\Phi S\) is defined to be the fiber \(F^{-1}(S)\). For any morphism \(f:T\rightarrow S\) in \(\mathcal{C}\) and object \(S'\in \mathrm{Ob}(\mathcal{C})\) over \(S\) it has a Cartesian lifting \(f'_{S'}:T'\rightarrow S'\) in \(\mathcal{D}\). This means for any object \(S'\) over \(S\), we choose some Cartesian morphism \(f'_{S'}\). Therefore we define a functor \(f^*:\Phi S\rightarrow \Phi T\) sending the object \(S'\) over \(S\) to \(T'\) via the chosen \(f'_{S'}\). 
\[\begin{tikzcd}
T' \arrow[d, no head, dotted] \arrow[r, "\exists!\ f^*u"', dotted] \arrow[rr, "f'_{S'}", bend left] & T'' \arrow[d, no head, dotted] \arrow[rr, "f'_{S''}", bend left] & S' \arrow[r, "u"'] \arrow[d, no head, dotted] & S'' \arrow[d, no head, dotted] \\
T \arrow[r,equal]                                                                                         & T \arrow[r, "f"']                                                & S \arrow[r,equal]                                   & S                             
\end{tikzcd}\]

\ Given any morphism \(u:S'\rightarrow S''\) over \(\mathrm{id}_{S}\), since \(f'_{S''}\) is a Cartesian map and \(u\circ f'_{S'}\) is lying over \(f\), there will exist a unique map \(f^*u:T'\rightarrow T''\) over \(\mathrm{id}_T\). The uniqueness will imply \(f^*:\Phi S\rightarrow \Phi T\) is actually a functor. If \(T=S\) and \(f=\mathrm{id}_S\), the diagram above with \(u\circ f'_{S'}=f'_{S''}\circ f^*u\) will imply the class of \(\{f'_{S'}:T'\rightarrow S'\}\) actually defines a natural isomorphism \(\epsilon:\mathrm{id}^*_{S}\xrightarrow{\cong}\mathrm{id}_{\Phi S}\). Note that the fact \(f'_{S'}\) is an isomorphism comes from Theorem \ref{1.4} and \ref{1.8}.

\ Given a pair of arrows \(Q\xrightarrow{f}T\xrightarrow{g}S\), the functor \(f^*\circ g^*\) is along the Cartesian morphism \(g'_{S'}\circ f'_{T'}\) for any object \(S'\) lying over \(S\) and \((g\circ f)^*\) is along \((g\circ f)'_{S'}\). The two choices of Cartesian maps for every object \(S'\) over \(S\) will naturally induce a unique isomorphism \(\alpha_{f,g}:f^{*}\circ g^*\xrightarrow{\cong}(g\circ f)^*\) for all \(S'\). Such isomorphism \(\alpha_{f,g}\) is a natural transformation which follows from the uniqueness. We leave readers as an exercise to check \(\alpha_{f,g}\) and \(\epsilon\) defined above satisfy the two canonical properties in the axiom (4) of Definition \ref{1.10}. They follows from the uniqueness as well.

\ Note that for a fixed fibered category \(F:\mathcal{D}\rightarrow \mathcal{C}\), different choices of Cartesian morphisms \(f'_{S'}\) will give isomorphic pseudo-functors.

\ Next let us try to describe the converse construction. Given a pseudo-functor \(\Phi\) over \(\mathcal{C}\) we want to associate it with a fiered category \(F:\mathcal{D}\rightarrow \mathcal{C}\).
\[\mathrm{Ob}(\mathcal{D}):=\{(s,S)|S\in \mathrm{Ob}(\mathcal{C}),\ s\in \mathrm{Ob}(\Phi S)\}\]
A morphism \((t,T)\xrightarrow{(u,f)}(s,S)\) is defined to be 
\[\begin{cases}
f:T\rightarrow S\ \text{in \(\mathcal{C}\)}\\
u:t\rightarrow f^{*}s\ \text{in \(\Phi T\)}
\end{cases}\]

\ Now it remains to define the composition of arrows in \(\mathcal{D}\). Given a composable pair of arrows
\[(q,Q)\xrightarrow{(u,f)}(t,T)\xrightarrow{(v,g)}(s,S)\]
a new morphism \((q,Q)\rightarrow (s,S)\) has two parts \(g\circ f:Q\rightarrow S\) and 
\[q\xrightarrow{u}f^*t\xrightarrow{f^*v}(f^*\circ g^*)s\xrightarrow[\cong]{\alpha_{f,g}}(g\circ f)^*s\]

\ We can check this actually defines a category. The identity map for \((s,S)\rightarrow (s,S)\) is actually \((\epsilon^{-1}_s,\mathrm{id}_S)\). For instance the composable map becomes 
\[q\xrightarrow{u}f^*s\xrightarrow{f^*(\epsilon^{-1}_s)}(f^*\circ \mathrm{id}^{*})s\xrightarrow{\alpha_{f,\mathrm{id}}=f^*\circ \epsilon}f^*s\]
which is just \(u\). To check the associativity for 
\[(q,Q)\xrightarrow{(u,f)}(t,T)\xrightarrow{(v,g)}(s,S)\xrightarrow{(w,h)}(q,Q)\]
firstly we describe \((u,f)\circ \big((v,g)\circ(w,h)\big)\).
\[t\xrightarrow{v}g^*s\xrightarrow{g^*w}g^*h^*q\xrightarrow{\alpha_{g,h}}(hg)^*q\]
Then \((u,f)\circ \big((v,g)\circ(w,h)\big)\) is just 
\[q\xrightarrow{u}f^*t\xrightarrow{f^*(\alpha_{g,h}\circ g^*w\circ v)}f^*(hg)^*q\xrightarrow{\alpha_{f,hg}}(hgf)^*q\]
For \(\big((u,f)\circ (v,g)\big)\circ(w,h)\), it's 
\[q\xrightarrow{\alpha_{f,g}\circ f^*v\circ u}(gf)^*s\xrightarrow{(gf)^*w}(gf)^*h^*q\xrightarrow{\alpha_{gf,h}}(hgf)^*q\]
Finally
\begin{align*}
    &\alpha_{f,hg}\circ f^*(\alpha_{g,h}\circ g^*w\circ v)\circ u\\
    =&\alpha_{f,gh}\circ f^*(\alpha_{g,h})\circ f^*g^*w\circ f^*v\circ u\\
    =&\alpha_{gf,h}\circ \alpha_{f,g} h^*\circ f^*g^*w\circ f^*v\circ u,\ \ \ \text{axiom (4) in Definition \ref{1.10}}\\
    =&\alpha_{gf,h}\circ (gf)^*w\circ \alpha_{f,g}\circ f^*v\circ u, \ \ \ \text{\(\alpha_{f,g}\) is a natural transformation}
\end{align*}
The last equation follows from the following commutative diagram
\[\begin{tikzcd}
f^*g^*s \arrow[d, "f^*g^*w"'] \arrow[r, "{\alpha_{f,g}}"] & (gf)^*s \arrow[d, "(gf)^*w"] \\
f^*g^*h^*q \arrow[r, "{\alpha_{f,g}h^*}"']                & (gf)^*h^*q                  
\end{tikzcd}\]

\ The functor \(F:\mathcal{D}\rightarrow \mathcal{C}\) is defined in the obvious manner, \((s,S)\mapsto S\) and \((u,f)\mapsto f\). Now we only need to prove \(F\) is a fibered category. Given any map \(f:T\rightarrow S\) in \(\mathcal{C}\) and any object \((s,S)\) over \(S\), the Cartesian lifting is defined to be 
\[(\mathrm{id},f):(f^*s,T)\longrightarrow (s,S)\]
In fact 
\[\begin{tikzcd}
{(q,Q)} \arrow[d, no head, dotted] \arrow[rr, "{(w,g)}", bend left] \arrow[r, dotted,"{(\theta,h)}"] & {(f^*s,T)} \arrow[r, "{(\mathrm{id},f)}"] \arrow[d, no head, dotted] & {(s,S)} \arrow[d, no head, dotted] \\
Q \arrow[r, "h"] \arrow[rr, "g"', bend right]                                         & T \arrow[r, "f"]                                                     & S                                 
\end{tikzcd}\]
to have \((\mathrm{id},f)\circ (\theta,h)=(w,g)\),
\[q\xrightarrow{\theta}h^*f^*s\xrightarrow{h^*(\mathrm{id})}h^*f^*s\xrightarrow{\alpha_{h,f}}(fh)^*s=g^*s\]
\(w=\alpha_{h,f}\circ \theta\Rightarrow \theta=\alpha_{h,f}^{-1}w\) is unique.
\end{proof}

\ In most cases it's easier to obtain a pseudo-functor than a fibered category directly.

\subsection{Descent Theory}
\begin{definition}
A \textbf{site} is a category \(\mathcal{C}\) (with pullbacks in general) with Grothendieck topology. A Grothendieck topology is a function \(\mathcal{K}:\mathrm{Ob}(\mathcal{C})\rightarrow \mathrm{Ob}(\mathbf{Sets})\) associating every object \(X\) in \(\mathcal{C}\) a set of arrows \(\{\iota_i:U_i\rightarrow X\}\) satisfying the following axioms
\item[(T1)] If \(f:U\xrightarrow{\sim}X\) is an isomorphism, then \(\{f:U\xrightarrow{\sim} X\}\in \mathcal{K}(X)\).
\item[(T2)] If \(\{\iota_i:U_i\rightarrow X\}\) is a covering, then for any morphism \(f:Y\rightarrow X\), \(\{Y\times_XU_i\rightarrow X\}\) is a covering as well.
\item[(T3)] If \(\{\iota_i:U_i\rightarrow X\}\) is a covering for \(X\) and for any \(i\in I\), \(\{\phi_{ij}:U_{ij}\rightarrow U_i\}\) is a covering for \(U_i\) then \(\{\iota_i\circ\phi_{ij}:U_{ij}\rightarrow X\}\) is a covering for \(X\).
\end{definition}

\begin{example}\label{1.13}
\normalfont
In \(\mathbf{Top}\) for any topological space \(X\), \(\{\iota_i:U_i\rightarrow X\}\) is a covering if \(\cup_i\iota_i(U_i)=X\) and \(\iota_i\)'s are open immersions which means \(U_i\rightarrow \iota_{i}(U_i)\) is a homeomorphism with \(\iota_i(U_i)\subseteq X\) is open. Then the Grothendieck topology is just the usual topology for \(X\).
\end{example}

\begin{example}\label{1.14}
\normalfont
We define the \textit{etale topology} for \(\mathbf{Top}\). In \(\mathbf{Top}\), an etale map \(p:E\rightarrow X\) is local heomorphism which means for every point \(e\in E\) there is an open neighborhood \(V\subseteq E\) containing \(e\) such that \(p|V:V\rightarrow p(V)\) is a homeomorphism with \(p(V)\subseteq X\) open.

\ A family of etale maps \(\{p_i:E_i\rightarrow X\}\in \mathcal{K}(X)\) if \(\cup_i p_i(E_i)=X\). (T1) is trivially satisfied since every homeomorphism is an etale map.

\ (T2). We only need to prove etales maps are stable under pullbacks.
\[\begin{tikzcd} 
Y\times_X E \arrow[r] \arrow[d] \arrow[dr, phantom, "\ulcorner", very near start] & E \arrow[d,"p"] \\ Y \arrow[r,"f" below] & X
\end{tikzcd}\]
Given \((y,e)\in Y\times_X E\), \(f(y)=p(e)\) then there will exist an open neighborhood \(U\subseteq E\) containing \(e\) such that \(p|U:U\xrightarrow{\approx}p(U)\). Let \(V=f^{-1}(p(U))\) and we obtain \(q:(V\times U)\cap (Y\times_X E)\rightarrow V\). We prove \(q\) is a heomorphism. At first \(q\) is surjective since for any \(y\in V\), \(f(y)\in p(U)\) and there is a unique \(e\in U\) such that \(p(e)=f(y)\). If \(q(y_1,e_1)=q(y_2,e_2)\) then \(y_1=y_2\Rightarrow f(y_1)=f(y_2)=p(e_1)=p(e_2)\). Since \(p|U\) is a heomorphism \(e_1=e_2\). Finally \(q\) is open since for any \(V'\times U'\subseteq V\times U\), \(q((V'\times U')\cap (Y\times_X E))=V'\) is open.

\ (T3) is also easy to prove because the composition of two etale maps is etale as well.
\end{example}

\begin{definition}[Descent Datum]
Let \(F:\mathcal{D}\rightarrow \mathcal{C}\) be a fibered category over a site \(\mathcal{C}\). A \textbf{descent datum} for \(\mathcal{D}\) over a covering \(\{\iota_i:X_i\rightarrow X\}\) consists of objects \(E_i\) over \(X_i\) and isomorphisms \(\alpha_{ji}:E_i|X_{ij}\rightarrow E_j|X_{ij}\) over \(X_{ij}\) i.e. in the fiber \(F^{-1}(X_{ij})\) where \(X_{ij}=X_i\times_X X_j\) and \(E_i|X_{ij}\) is the pullback of \(E_i\) along \(X_{ij}\rightarrow X_i\).
\[\begin{tikzcd}[row sep=scriptsize, column sep=scriptsize] 
& X_{ijk}\arrow[dr,phantom, "\ulcorner", very near start] \arrow[dl] \arrow[rr] \arrow[dd] & & X_{jk}\arrow[dddl,phantom, "\ulcorner", very near start] \arrow[dl] \arrow[dd] \\
X_{ij}\arrow[rrdd,phantom, "\ulcorner", very near start] \arrow[rr, crossing over] \arrow[dd] & & X_j \\ 
& X_{ik}\arrow[dr,phantom, "\ulcorner", very near start]\arrow[dl] \arrow[rr] & & X_k \arrow[dl] \\ X_i \arrow[rr] & & X \arrow[from=uu, crossing over]\\
\end{tikzcd}\]
\((E_i,\alpha_{ji})\) satisfies the cocycle condition 
\[\begin{tikzcd}
                                       & E_i|X_{ijk} \arrow[ld, "\alpha_{ki}"'] \arrow[rd, "\alpha_{ji}"] &               \\
E_k|X_{ijk} \arrow[rr, "\alpha_{jk}"'] &                                                                  & E_{j}|X_{ijk}
\end{tikzcd}\]
which means \(\alpha_{jk}\circ \alpha_{ki}=\alpha_{ji}\) over \(X_{ijk}\) i.e. in the fiber \(F^{-1}(X_{ijk})\).

\ A descent datum \((E_i,\alpha_{ji})\) is said to be \textbf{effective} if there exists some object \(E\) over \(X\) with isomorphisms \(\alpha_i:E|X_{i}\xrightarrow{\sim} E_i\) over \(X_i\) such that 
\[\alpha_{ji}=(\alpha_j|X_{ij})\circ (\alpha_i|X_{ij})^{-1}\]

\ A morphism \(f:(E_i,\alpha_{ji})\rightarrow (E'_i,\alpha'_{ji})\) consists of a class of morphisms \(f_i:E_{i}\rightarrow E'_i\) over \(X_i\) such that the following diagram is commutative
\[\begin{tikzcd}
E_i|X_{ij} \arrow[d, "\alpha_{ji}"'] \arrow[r, "f_i|X_{ij}"] & E'_{i}|X_{ij} \arrow[d, "\alpha'_{ji}"] \\
E_j|X_{ij} \arrow[r, "f_j|X_{ij}"']                          & E'_j|X_{ij}                            
\end{tikzcd}\]
\end{definition}

\ The definition above defines a category of descent datum for every covering \(\{\iota_i:X_i\rightarrow X\}\). We denote this category by \(F_{\mathrm{des}}(\{\iota_i:X_i\rightarrow X\})\). Then there is a natural functor 
\[F^{-1}(X)\rightarrow F_{\mathrm{des}}(\{\iota_i:X_i\rightarrow X\}),\ E\mapsto (E|X_{i},\beta_{ji})\]
\(\beta_{ji}\) is the canonical isomorphism. In Theorem \ref{1.11}, we view the fibered category \(F\) as a pseudo-functor. Here we persist this view point and natural isomorphisms \(\alpha_{f,g}\)'s are defined as in Theorem \ref{1.11}. Then for any pullback diagram 
\[\begin{tikzcd} 
X_{ij} \arrow[r,"pr_2"] \arrow[d,"pr_1"'] \arrow[dr, phantom, "\ulcorner", very near start] & X_j \arrow[d,"\iota_j"] \\ X_i \arrow[r,"\iota_i" below] & X
\end{tikzcd}\]
\(\beta_{ji}\) is defined to be 
\[pr_{1}^*\iota_{i}^*E=(E|X_i)|X_{ij}\xrightarrow[\cong]{\alpha_{pr_1,\iota_i}}(\iota_ipr_1)^*E=E|X_{ij}\xrightarrow[\cong]{\alpha_{pr_2,\iota_j}^{-1}}pr_{2}^*\iota_{j}^*E=(E|X_j)|X_{ij}\]

\begin{definition}
A fibered category \(F:\mathcal{D}\rightarrow \mathcal{C}\) over a site \(\mathcal{C}\) is a \textbf{prestack} (resp. \textbf{stack}) is for any covering \(\{\iota_i:X_i\rightarrow X\}\) the natural functor \(F^{-1}(X)\rightarrow F_{\mathrm{des}}(\{\iota_i:X_i\rightarrow X\})\) is \textbf{fully faithful} (resp. an \textbf{equivalence}).
\end{definition}

\begin{remark}\label{1.17}
\normalfont
Roughly speaking the axiom for prestacks is to say morphisms between descent datums form a sheaf. For groupoid fibrations, the morphisms are actually isomorphisms. A prestack is a stack iff every descent datum is effective.
\end{remark}

\section{The Geonetry of Triangles}
\begin{equation}
\notag
\begin{tikzpicture}[scale=1]
\draw[black] (-1,0) -- (2,0);
\draw[black] (-1,0) -- (0,1);
\draw[black] (0,1) -- (2,0);
\filldraw[black] (-1,0) circle (0pt) node[anchor=north]{\(B\)};
\filldraw[black] (0,1) circle (0pt) node[anchor=south]{\(A\)};
\filldraw[black] (2,0) circle (0pt) node[anchor=north]{\(C\)};
\filldraw[black] (-0.5,0.6) circle (0pt) node[anchor=south]{\(x\)};
\filldraw[black] (0.3,0) circle (0pt) node[anchor=north]{\(z<x+y\)};
\filldraw[black] (1,0.5) circle (0pt) node[anchor=south]{\(y\)};
\end{tikzpicture}
\end{equation}

\ A triangle is totally determined by lengths of its three edges. For any oriented triangle which means all of its three points \(A,\ B,\ C\) are fixed, it satisfies the following equations
\[M:=\begin{cases}
x+y>z>0\\
x+z>y>0\\
y+z>x>0
\end{cases}\]
In the three dimensional space \(\mathbb{R}^3\) it's an open cone $M$.
\begin{equation}\label{M}
\begin{tikzpicture}[scale=1]
\draw[thick,->] (0,0,0) -- (3,0,0);
\draw[thick,->] (0,0,0) -- (0,3,0);
\draw[thick,->] (0,0,0) -- (0,0,4);
\filldraw[black] (-2.5,2.4,0) circle (0pt) node[anchor=north]{\textit{M}:};
\filldraw[black] (3.5,0,0) circle (0pt) node[anchor=north]{\(y\)};
\filldraw[black] (0,3.5,0) circle (0pt) node[anchor=north]{\(z\)};
\filldraw[black] (0,0,4) circle (0pt) node[anchor=north]{\(x\)};
\draw[dotted] (2,2,0) -- (2,0,2);
\draw[dotted] (2,2,0) -- (0,2,2);
\draw[dotted] (0,2,2) -- (2,0,2);
\draw[dotted] (0,0,0) -- (2,0,2);
\draw[dotted] (0,0,0) -- (2,2,0);
\draw[dotted] (0,0,0) -- (0,2,2);
\end{tikzpicture}
\end{equation}

\ Two triangles correspond to the same point iff there is an oriented isometry between them. The boundary of this open cone represents degenerate triangles which are just line segments or the point. Moreover since this open cone is path connected, we see any two triangles can be continuously deformed to become each other. Note that intuitively every path in \(M\) defines an \textit{(oriented) continuous family of triangles}. Now for simplicity to draw a picture, we consider triangles up to similarity for a while. Then we have 
\[\begin{cases}
x+y>z>0\\
x+z>y>0\\
y+z>x>0\\
x+y+z=2
\end{cases}\]
which is just the equilateral triangle 
\begin{equation}
\notag
\begin{tikzpicture}[scale=1]
\draw[dotted] (2,2,0) -- (2,0,2);
\draw[dotted] (2,2,0) -- (0,2,2);
\draw[dotted] (0,2,2) -- (2,0,2);
\end{tikzpicture}
\end{equation}
and we also denote it by \(M\).

\ If we replace the relation of oriented isometries by arbitrary isometries, then \(M\) should be changed by 
\[N:=\{(x,y,z)\in \mathbb{R}^3|0<x\leq y\leq z,\ x+y>z\}\]
Its diagram up to similarity is 
\begin{equation}
\notag
\begin{tikzpicture}[scale=1]
\draw[dotted] (2,2,0) -- (2,0,2);
\draw[dotted] (2,2,0) -- (0,2,2);
\draw[dotted] (0,2,2) -- (2,0,2);
\filldraw[black] (2,2,0) circle (0pt) node[anchor=west]{\((0,1,1)\)};
\filldraw[black] (2,0,2) circle (0pt) node[anchor=north]{\((1,1,0)\)};
\filldraw[black] (0,2,2) circle (0pt) node[anchor=east]{\((1,0,1)\)};
\draw[black] (1,2,1) -- (1.33,1.33,1.33);
\draw[black] (2,2,0) -- (1.33,1.33,1.33);
\end{tikzpicture}
\end{equation}
It's actually a right triangle and we denote it by \(N\) as well.
\begin{equation}\label{N}
\begin{tikzpicture}[scale=2]
\draw[dotted] (0,0) -- (1.73,0);
\draw[black] (1.73,1) -- (1.73,0);
\draw[black] (1.73,1) -- (0,0);
\filldraw[black] (0,0) circle (0pt) node[anchor=north]{\((0,1,1)\)};
\filldraw[black] (1.73,0) circle (0pt) node[anchor=north]{\((\frac{1}{2},\frac{1}{2},1)\)};
\filldraw[black] (1.73,1) circle (0pt) node[anchor=west]{\((\frac{2}{3},\frac{2}{3},\frac{2}{3})\)};
\filldraw[black] (1.73,0.5) circle (0pt) node[anchor=west]{\(x=y\)};
\filldraw[black] (0.9,0) circle (0pt) node[anchor=north]{\(x+y=z\)};
\filldraw[black] (0.86,0.5) circle (0pt) node[anchor=east]{\(y=z\)};
\filldraw[black] (-0.5,1) circle (0pt) node[anchor=west]{\textit{N}:};
\end{tikzpicture}
\end{equation}

\ Do spaces \(M\) and \(N\) contain all information of triangles especially families of triangles up to some type of equivalence? It's the question we mainly focus on in this section. Now we try to make the statement above precise.

\begin{definition}
A map \(f:X\rightarrow Y\) in \(\mathbf{Top}\) is \textbf{proper} if it's closed and \(\forall y\in Y,\ f^{-1}(y)\) is compact in \(X\).
\end{definition}

\ A map \(f:X\rightarrow Y\) is proper iff it's \textit{universally closed} which means for any pullback diagram 
\[\begin{tikzcd}
	{Z\times_Y X} & X \\
	Z & Y
	\arrow["f",from=1-2, to=2-2]
	\arrow["g"',from=2-1, to=2-2]
	\arrow[from=1-1, to=1-2]
	\arrow[from=1-1, to=2-1]
	\arrow["\ulcorner"{anchor=center, pos=0.125}, draw=none, from=1-1, to=2-2]
\end{tikzcd}\]
the map \(Z\times_YX\rightarrow Z\) will be closed. The proof can be found in [\href{https://stacks.math.columbia.edu/tag/005R}{Tag005R}]. Then it's easy to see proper maps are stable under pullbacks.

\begin{definition}
A \textbf{(non-oriented) continuous family of triangles} over a base space \(X\) is a continuous proper map \(\mathcal{F}\rightarrow X\) making \(\mathcal{F}\) be a fiber bundle over \(X\) such that there is a distance map \(d:\mathcal{F}\times_X \mathcal{F}\rightarrow \mathbb{R}_{\geq 0}\) whose restriction on \(\mathcal{F}_x\times \mathcal{F}_x\) is a metric on \(\mathcal{F}_x\) making it isometric to a triangle for any \(x\in X\). Moreover for every fiber \(\mathcal{F}_x\) we can associate it with a triple \((A_x,B_x,C_x)\) as endpoints of this triangle. There exists a global ordering $(A,B,C)$ such that the induced distance map
\[d'_{A,B} \ (d'_{A,C},\ \text{resp.}\ d'_{B,C}):X\rightarrow \mathbb{R}_{\geq 0},\ x\mapsto d(A_x,B_x)\ \  \big(d(A_x,C_x), \ \text{resp. } d(B_x,C_x)\big)\]
is a continuous map where \(d(A_x,B_x)\ \big(d(A_x,C_x), \ \text{resp. } d(B_x,C_x)\big)\)  comes from the distance map \(d:\mathcal{F}\times_X\mathcal{F}\rightarrow \mathbb{R}_{\geq 0}\). We say the pair $(\mathcal{F},A,B,C)$ is an \textbf{oriented continuous family of triangles}.
\end{definition}

\ We can define a morphism between two continuous families to be a continuous map over the base space. Then we associate every space \(X\) with a category consisting of continuous families of triangles over \(X\). But be careful here. In this category isomorphisms are just homeomorphisms, however, any two triangles are homeomorphic! In the ordinary case, to identify two triangles, we choose the notion of isometries instead of homeomorphisms. This case implies the category we defined above is not true here. Actually instead of obtaining a usual category, we try to obtain a groupoid according to our philosophy of identification, which proves to be more convenient.

\ So that an isomorphism \(\mathcal{F}\rightarrow \mathcal{G}\) between two families should induce isometries on fibers and then we will obtain a groupoid for every \(X\). It's the non-oriented viewpoint. As for the oriented viewpoint, isomorphisms of oriented families should moreover induce identity maps on those triples of endpoints. It's our main idea here and it provides a new way to think about the relation between \(M\) and \(N\). 

\ Any permutation of \(\{A,B,C\}\) induces an isometry on triangles. Since the permutation group of three elements is \(S_3\), viewing a triangle as a family over the one point space we may consider an action of \(S_3\) on \(M\) and it's not difficult to see the quotient space \(M/S_3\) is just \(N\). 

\ In the following let us consider the discrete case first, which means we do not view families of triangles over \(X\) as a category but just a set of isomorphism classes.

\begin{definition}
A \textbf{moduli problem} \(\mathcal{M}\) for topological spaces is a functor \(\mathbf{Top}^{op}\rightarrow \mathbf{Sets}\). A topological space \(M\) is called a \textbf{fine moduli spce} for \(\mathcal{M}\) if there is a natural isomorphism \(\mathcal{M}\xrightarrow{\sim}\mathrm{Hom}_{\mathbf{Top}}(-,M)\).
\end{definition}

\ We define a moduli functor for the problem of classifying (non-oriented) continuous families of triangles.
\[\mathcal{N}:\mathbf{Top}^{op}\rightarrow \mathbf{Sets},\ X\mapsto \{\text{continuous families of triangles over \(X\)}\}/\sim\]
For any continuous map \(f:Y\rightarrow X\), \(\mathcal{N}_f(\mathcal{F})\) is the pullback
\[\begin{tikzcd}
	{\mathcal{N}_f(\mathcal{F})} & {\mathcal{F}} \\
	Y & X
	\arrow["f"', from=2-1, to=2-2]
	\arrow[from=1-1, to=1-2]
	\arrow[from=1-2, to=2-2]
	\arrow[from=1-1, to=2-1]
	\arrow["\ulcorner"{anchor=center, pos=0.125}, draw=none, from=1-1, to=2-2]
\end{tikzcd}\]
Note that for every \(y\in Y\), the triangle \(\mathcal{N}_f(\mathcal{F})_y\) is actually \(\mathcal{F}_{f(y)}\). \(\mathcal{N}\) is then the functor for (non-oriented) families of triangles. As for oriented families, the moduli problem $\mathcal{M}$ is defined such that $\mathcal{M}(X)$ is the set of oriented continuous families of triangles up to isomorphism. \(\mathcal{M}_f\) is defined by pullbacks as well and \(\mathcal{M}_f(\mathcal{F})\) carries a natural orientation from \(\mathcal{F}\).

\begin{theorem}\label{2.4}
The moduli problem \(\mathcal{M}\) admits a fine moduli space, which is just the open cone \(M\) in the picture (\ref{M}).
\end{theorem}
\begin{proof}
There is a universal family \(\widetilde{M}\subseteq M\times \mathbb{R}^2\) of triangles over \(M\) whose fiber \(\widetilde{M}_m\) is just the triangle defined by endpoints \(A,\ B,\ C\) and edge lengths \(x,\ y,\ z\) described at the beginning of this section where \(m=(x,y,z)\in M\). And on the part of \(\mathbb{R}^2\) all points of \(A_m\) coincide. It's intuitive to see the projection \(\widetilde{M}\rightarrow M\) actually defines a continuous family of triangles which is left to readers. In the following we prove for any family \(\mathcal{F}\rightarrow X\) there is a unique map \(X\rightarrow M\) making \(\mathcal{F}\) be the pullback \(\mathcal{M}_f(\widetilde{M})\).

\ For \(p:\mathcal{F}\rightarrow X\), there is a unique map \(q:X\rightarrow M\) such that \(q(x)\in M\) corresponds exactly the oriented triangle \(\mathcal{F}_x\). We prove \(q\) is continuous first. Suppose \(Z\subseteq M\) is closed. \(q^{-1}(Z)=p(\underset{x\in q^{-1}(Z)}{\cup}\mathcal{F}_x)\). Since \(p\) is universally closed, it's enough to prove \(\underset{x\in q^{-1}(Z)}{\cup}\mathcal{F}_x\) is closed. This is not difficult to understand since \(\mathcal{F}\rightarrow X\) is a fiber bundle and in a neighborhood of \(\mathcal{F}\), triangles should not be so far away from each other. Formally speaking, we should choose a limit point \(u\in \mathcal{F}\) over some \(y\in X\) and to prove \(u\in \underset{x\in q^{-1}(Z)}{\cup}\mathcal{F}_x\), it's enough to prove the triangle \(\mathcal{F}_y\) is in the closed subset \(Z\).

\ Since distance maps 
\[d'_{A,B},\ d'_{A,C},\ d'_{B,C}:X\rightarrow \mathbb{R}_{\geq 0}\]
are all continuous, for any open subset \(V\subseteq M\) containg the triangle \(\mathcal{F}_y\) as a point, there will be an open neighborhood \(U\) of \(X\) containing \(y\) such that \(q(U)\subseteq V\). \(p^{-1}(U)\) is an open neighborhood of \(u\) hence containing some point of \(\underset{x\in q^{-1}(Z)}{\cup}\mathcal{F}_x\) which means some point of \(q^{-1}(Z)\) lies in \(U\). Then \(V\cap Z\ne \emptyset\) and since \(Z\) is a closed subset, as a point the triangle \(\mathcal{F}_y\) belongs to \(Z\).

\ The map \(\mathcal{F}\rightarrow \widetilde{M}\) can be defined on fibers. For every \(x\in X\), there is an isomoetry \(\mathcal{F}_x\rightarrow \widetilde{M}_{q(x)}\). All of these maps on fibers \(\mathcal{F}_x\) can be glued to be a global continuous map \(\mathcal{F}\rightarrow \widetilde{M}\).
\[\begin{tikzcd}
	{\mathcal{F}} & {\widetilde{M}\subseteq M\times \mathbb{R}^2} \\
	X & M
	\arrow["q"', from=2-1, to=2-2]
	\arrow["{pr_1}", from=1-2, to=2-2]
	\arrow["p"', from=1-1, to=2-1]
	\arrow[from=1-1, to=1-2]
	\arrow["\ulcorner"{anchor=center, pos=0.125}, draw=none, from=1-1, to=2-2]
\end{tikzcd}\]

\ To see \(\mathcal{F}\rightarrow \widetilde{M}\) is continuous we should realize \(\mathcal{F}\rightarrow X\) is a fiber bundle which means for any \(x\in X\) there will exist an open neighborhood \(U\subseteq X\) containing \(x\) such that 
\[\begin{tikzcd}
	{p^{-1}(U)} & {U\times T} \\
	& U
	\arrow["{pr_1}", from=1-2, to=2-2]
	\arrow["\approx", from=1-1, to=1-2]
	\arrow["p"', from=1-1, to=2-2]
\end{tikzcd}\]
The resulting map \(U\times T\rightarrow M\times \mathbb{R}^2\) is continuous. And then the canonical map \(\mathcal{F}\rightarrow X\times_M \widetilde{M}\) induces isometries on fibers hence being an isomorphism for oriented families of triangles.
\end{proof}

\begin{remark}\label{2.5}
\normalfont
Although \(M\) is a fine moduli space classifying oriented triangles, its quotient space \(M/S_3=N\) is not a fine moduli space classifying non-oriented triangles.
\[\begin{tikzpicture}[scale=2]
\draw (0,0) circle (1cm);
\filldraw[black] (0.6,0.8) circle (0pt) node[anchor=west]{\(A\)};
\draw[black] (-0.6,0.8) -- (0.6,0.8);
\filldraw[black] (-0.6,0.8) circle (0pt) node[anchor=east]{\(B\)};
\draw[dotted] (-0.6,-0.8) -- (0.6,-0.8);
\filldraw[black] (0,-1) circle (0pt) node[anchor=north]{\(I\)};
\filldraw[black] (0.6,-0.8) circle (0pt) node[anchor=west]{\(D\)};
\filldraw[black] (-0.6,-0.8) circle (0pt) node[anchor=east]{\(E\)};
\filldraw[black] (0,0) circle (0pt) node[anchor=south]{\(O\)};
\draw[black] (0.6,0.8) -- (0.6,-0.8);
\draw[black] (-0.6,0.8) -- (0.6,-0.8);
\draw[black] (0.6,0.8) -- (-0.6,-0.8);
\draw[black] (-0.6,0.8) -- (-0.6,-0.8);
\draw[dotted] (-0.6,0.8) -- (0,-1);
\draw[dotted] (0.6,0.8) -- (0,-1);
\end{tikzpicture}\]

\ Consider the picture above. \(I\) is the arc \(\wideparen{DE}\) and actually we may view \(I\) as the unit interval. For a triangle \(T\), its endpoints \(A,\ B\) are fixed and the third endpoint \(C\) lies in the arc \(I\). We define two families of triangles over \(I\).

\ The first family \(\mathcal{F}\subseteq I\times \mathbb{R}^2\) is defined such that the endpoint \(C\) goes from \(D\) to \(E\) continuously which means the beginning triangle is \(\triangle ABD\) and the end triangle is \(\triangle ABE\).

\ The second family \(\mathcal{G}\subseteq I\times \mathbb{R}^2\) is defined such that the end point \(C\) goes from \(D\) to the midpoint of the arc \(I\) first and then goes back from this midpoint to \(D\). This means the beginning and end triangles are all \(\triangle ABD\).

\ The two families are different since to construct an isomorphism \(\mathcal{F}\rightarrow \mathcal{G}\) when the endpoint \(C\) is in the first half part of the arc \(I\) , the induced map on fibers should be the identity map. But when \(C\) is in the second half part, the induced map on fibers should be the symmetry along the diameter passing through the origin \(O\) and the midpoint of \(I\). Such isometries on fibers can not be glued to a continuous global map \(\mathcal{F}\rightarrow \mathcal{G}\) since there is a dramtic change around the isosceles triangle whose end points consisting of \(A,\ B\) and the midpoint of \(I\).

\ Here the isomorphsm classes of \(\mathcal{F}\) and \(\mathcal{G}\) are different but they induce the same map \(I\rightarrow N\) since triangles \(\triangle ABD\) and \(\triangle ABE\) are identified in \(N\). Therefore \(N\) is not a fine moduli space. We will prove later that \(N\) is a so called \textit{coarse moduli space}.
\end{remark}

\begin{remark}\label{2.6}
\normalfont
From the Remark \ref{2.5} we know isosceles triangles prevent \(N\) from being a fine moduli space. Actually the moduli functor \(\mathcal{N}'\) classifying non-oriented \textit{scalene triangles} (all edges has different lengths) admits a fine moduli space \(N'\), whose picture up to similarity is 
\begin{equation}\label{N'}
\begin{tikzpicture}[scale=2]
\draw[dotted] (0,0) -- (1.73,0);
\draw[dotted] (1.73,1) -- (1.73,0);
\draw[dotted] (1.73,1) -- (0,0);
\filldraw[black] (0,0) circle (0pt) node[anchor=north]{\((0,1,1)\)};
\filldraw[black] (1.73,0) circle (0pt) node[anchor=north]{\((\frac{1}{2},\frac{1}{2},1)\)};
\filldraw[black] (1.73,1) circle (0pt) node[anchor=west]{\((\frac{2}{3},\frac{2}{3},\frac{2}{3})\)};
\filldraw[black] (1.73,0.5) circle (0pt) node[anchor=west]{\(x=y\)};
\filldraw[black] (0.9,0) circle (0pt) node[anchor=north]{\(x+y=z\)};
\filldraw[black] (0.86,0.5) circle (0pt) node[anchor=east]{\(y=z\)};
\filldraw[black] (-0.5,1) circle (0pt) node[anchor=west]{\textit{N}':};
\end{tikzpicture}
\end{equation}

\ A family \(\mathcal{F}\) of scalene triangles over \(X\) has a natural ordering \((A,B,C)\) such that \(A_xB_x<A_xC_x<B_xC_x\). An isomorphism between non-oriented families of scalene triangles preserves these natural orderings so that from Theorem \ref{2.4} we know the moduli functor classifying scalene triangles is fine.
\end{remark}

\ For a fine moduli functor \(\mathcal{M}\) its corresponding functor is actually \(P_M:\mathbf{Top}/M\rightarrow \mathbf{Top}\).

\begin{theorem}\label{2.7}
No matter for the usual topology or the etale topology described in Example \ref{1.14}, the category \(\mathbf{Top}/M\) over \(\mathbf{Top}\) is a stack.
\end{theorem}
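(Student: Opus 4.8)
The plan is to recognize that $\mathbf{Top}/M$, with its projection $P_M$, is the fibered category represented by the object $M$, and that its fibers are \emph{discrete}. A direct computation shows the fiber $P_M^{-1}(X)$ is the set $\mathrm{Hom}_{\mathbf{Top}}(X,M)$ carrying only identity morphisms, since a morphism over $\mathrm{id}_X$ from $f:X\to M$ to $g:X\to M$ is a map $h:X\to X$ with $g\circ h=f$ and $P_M(h)=h=\mathrm{id}_X$, forcing $f=g$. The Cartesian lift of $f:X\to Y$ against an object $g:Y\to M$ is simply the composite $g\circ f:X\to M$, with the structure arrow being $f$ itself; one checks this is Cartesian, so $\mathbf{Top}/M$ is a fibered category, indeed a groupoid fibration by Corollary \ref{1.9} because all fibers are (discrete, hence) groupoids. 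First I would unwind the descent category $F_{\mathrm{des}}(\{\iota_i:X_i\to X\})$ using discreteness: an isomorphism $\alpha_{ji}$ in a discrete fiber can only be an identity, so it exists precisely when $f_i|X_{ij}=f_j|X_{ij}$, the cocycle condition is automatic, and morphisms of descent data are forced to be identities. Thus a descent datum is exactly a family $(f_i)$ agreeing on all overlaps $X_{ij}$.

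With this dictionary the two halves of the stack condition from Remark \ref{1.17} separate cleanly. For the prestack (fully faithful) part I must show that $f,g:X\to M$ are equal as soon as $f|X_i=g|X_i$ for every $i$; translated through the discrete fibers this is precisely that the comparison $\mathrm{Hom}_{F^{-1}(X)}(f,g)\to \mathrm{Hom}_{F_{\mathrm{des}}}$ is a bijection. For the usual topology this is immediate, since the $X_i$ cover $X$ pointwise. For the étale topology it follows because each $p_i:E_i\to X$ is surjective onto the open set $p_i(E_i)$ with $\bigcup_i p_i(E_i)=X$ (Example \ref{1.14}), so agreement of $f\circ p_i$ and $g\circ p_i$ for all $i$ forces $f(x)=g(x)$ at every point $x$.

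For the stack (effectiveness) part I must glue an abstract descent datum $(f_i)$ to a genuine map $f:X\to M$ with $f|X_i=f_i$. In the usual-topology case the $X_{ij}$ are the intersections $U_i\cap U_j$, the agreement $f_i|X_{ij}=f_j|X_{ij}$ is ordinary agreement on overlaps, and the classical gluing lemma for continuous maps produces the unique $f$. The étale case is where I expect the real work to sit, and it is the main obstacle: here $p_i$ need not be injective, so I would first exploit the \emph{self-overlap} $E_i\times_X E_i$ — whose descent condition forces $f_i(e)=f_i(e')$ whenever $p_i(e)=p_i(e')$ — together with the cross terms $E_i\times_X E_j$, to see that the rule $f(x):=f_i(e)$ for any $(i,e)$ with $p_i(e)=x$ is well defined on all of $X$. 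Continuity is then local: around each $x$ I choose $e$ with $p_i(e)=x$ and a neighbourhood $V$ on which $p_i|V$ is a homeomorphism onto an open set, so that $f=f_i\circ (p_i|V)^{-1}$ is continuous near $x$; and by construction $f\circ p_i=f_i$, giving effectiveness. Having verified full faithfulness and effectiveness, Remark \ref{1.17} lets me conclude that $\mathbf{Top}/M$ is a stack for both topologies.
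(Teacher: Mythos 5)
Your proposal is correct and takes essentially the same route as the paper: both exploit that the fibers of \(P_M\) are discrete, so descent data collapse to families \((f_i)\) agreeing on overlaps (cocycle condition trivial) and the stack condition reduces to the sheaf property of \(\mathrm{Hom}_{\mathbf{Top}}(-,M)\) for both topologies, which is the paper's Lemma \ref{2.8} in the etale case, with your self-overlap observation \(E_i\times_X E_i\) being exactly the paper's remark that well-definedness follows since pullbacks of the \(f_i\) coincide. The only divergence is cosmetic: the paper proves continuity of the glued map by noting \(f^{-1}(V)=\bigcup_i p_i(f_i^{-1}(V))\) is open because etale maps are open, whereas you invert \(p_i\) locally and write \(f=f_i\circ (p_i|V)^{-1}\) near each point; the two arguments are equivalent.
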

\begin{proof}
\(P_M:\mathbf{Top}/M\rightarrow \mathbf{Top}\) is a groupoid fibration. Consider the diagram
\[\begin{tikzcd}
	{x'} & x & y & {:\mathbf{Top}/M} \\
	X & X & Y & {:\mathbf{Top}}
	\arrow["f"', from=2-2, to=2-3]
	\arrow["{f'}", from=1-2, to=1-3]
	\arrow["{P_M}", dotted, no head, from=1-3, to=2-3]
	\arrow[dotted, no head, from=1-2, to=2-2]
	\arrow[equal, from=2-1, to=2-2]
	\arrow["{f''}", bend left, from=1-1, to=1-3]
	\arrow[dotted, no head,from=1-1, to=2-1]
\end{tikzcd}\]
where \(x',x:X\rightarrow M\) and \(y:Y\rightarrow M\) represent morphisms with the target \(M\). Then \(f'\) is actually the map \(f\) and is unique. Therefor for any other lifting \(f''\) over \(f\), the unique lifting \(\mathrm{id}\) over \(\mathrm{id}_X\) satisfies \(f'\circ \mathrm{id}=f''=f\).

\ Whenever \(\mathbf{Top}\) is endowed with a subcanonical topology which means for any space \(X\), the representable functor \(\mathrm{Hom}_{\mathbf{Top}}(-,X)\) is a sheaf, \(\mathbf{Top}/M\) is a stack. Then this theorem follows from Lemma \ref{2.8}.

\ For any covering \(\{p_i:E_i\rightarrow X\}\), given a descent datum \((e_i,\alpha_{ji})\) where \(e_i:E_i\rightarrow M\) since \(\alpha_{ji}\) is over \(E_{ij}\) i.e. in the fiber of \(E_{ij}\), \(\alpha_{ji}=\mathrm{id}\). Then the cocycle condition is trivial. \(e_{i}|E_{ij}=e_j|E_{ij}\). From the sheaf axiom there will exist a unique morphism \(e:X\rightarrow M\) whose restriction on \(E_i\) is \(e_i\). This proves every descent datum is effective. The fully faithfulness follows from the sheaf axiom directly.
\end{proof}

\begin{lemma}\label{2.8}
With the etale topology for any space \(Y\), the representable functor \(\mathrm{Hom}_{\mathbf{Top}}(-,Y):\mathbf{Top}\rightarrow \mathbf{Sets}\) is a sheaf.
\end{lemma}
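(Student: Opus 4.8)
The plan is to verify the sheaf condition for the presheaf \(\mathrm{Hom}_{\mathbf{Top}}(-,Y)\) directly, i.e. to show that for every etale covering \(\{p_i:E_i\rightarrow X\}\) (so each \(p_i\) is a local homeomorphism and \(\cup_i p_i(E_i)=X\), as in Example \ref{1.14}) the diagram
\[
\mathrm{Hom}(X,Y)\longrightarrow \prod_i \mathrm{Hom}(E_i,Y)\rightrightarrows \prod_{i,j}\mathrm{Hom}(E_{ij},Y),\qquad E_{ij}=E_i\times_X E_j,
\]
is an equalizer, the two parallel arrows sending a family \((f_i)\) to \((f_i\circ pr_1)\) and to \((f_j\circ pr_2)\). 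I would split the argument into \emph{separatedness} (the first arrow is injective) and \emph{gluing} (its image is exactly the equalizer of the parallel arrows).

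First I would treat separatedness. Suppose \(f,g:X\rightarrow Y\) satisfy \(f\circ p_i=g\circ p_i\) for all \(i\). Since the covering condition forces \(\cup_i p_i(E_i)=X\), every \(x\in X\) is \(p_i(e)\) for some \(i\) and some \(e\in E_i\), whence \(f(x)=f(p_i(e))=g(p_i(e))=g(x)\); thus \(f=g\). This also settles the uniqueness half of the gluing step. For existence, start from a compatible family \((f_i)\), meaning \(f_i\circ pr_1=f_j\circ pr_2\) on each \(E_{ij}\), and define \(f:X\rightarrow Y\) set-theoretically by \(f(x):=f_i(e)\) for any choice of \(i\) and \(e\in E_i\) with \(p_i(e)=x\). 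Well-definedness is precisely the compatibility hypothesis: if \(p_i(e)=p_j(e')=x\) then \((e,e')\) is a point of \(E_{ij}\), and the equality \(f_i\circ pr_1=f_j\circ pr_2\) forces \(f_i(e)=f_j(e')\).

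The hard part will be promoting this set map \(f\) to a continuous one, and this is where the etale hypothesis does the real work rather than the mere joint surjectivity of an open cover. Since each \(p_i\) is a local homeomorphism, around any \(e\in E_i\) there is an open \(V\subseteq E_i\) with \(p_i|V:V\xrightarrow{\approx}p_i(V)\) and \(p_i(V)\subseteq X\) open. On the open set \(p_i(V)\) one has \(f=f_i\circ (p_i|V)^{-1}\), a composite of continuous maps. As the sets \(p_i(V)\) cover \(X\) and continuity is a local property, \(f\) is continuous. Equivalently, \(\pi=\coprod_i p_i:\coprod_i E_i\rightarrow X\) is an open continuous surjection, hence a quotient map, and \(f\) is the unique map with \(f\circ\pi=\coprod_i f_i\) continuous, so \(f\) is continuous by the universal property of quotients. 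By construction \(f\circ p_i=f_i\), so \((f_i)\) lies in the image, and its uniqueness is the separatedness already proved; this establishes the equalizer and shows \(\mathrm{Hom}(-,Y)\) is a sheaf for the etale topology.

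Finally I would observe that this also covers the usual topology of Example \ref{1.13}: every open immersion is in particular an etale map, so every open covering is an etale covering, and the sheaf condition for the finer etale topology immediately implies it for the coarser usual one. Hence \(\mathbf{Top}\) is subcanonical for both topologies, which is exactly the input needed in the proof of Theorem \ref{2.7}.
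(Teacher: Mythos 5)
Your proposal is correct and takes essentially the same approach as the paper: the identical set-theoretic gluing \(f(x)=f_i(e_i)\) with well-definedness from compatibility over \(E_{ij}\) and uniqueness from joint surjectivity, and continuity resting on the same fact that etale maps are open (the paper verifies \(f^{-1}(V)=\cup_i p_i(f_i^{-1}(V))\) directly, while you express \(f\) locally as \(f_i\circ(p_i|V)^{-1}\) or invoke the quotient map \(\coprod_i p_i\) --- the same mechanism in a different packaging). Your closing remark that every open covering is an etale covering, so the usual topology case of Theorem \ref{2.7} follows for free, is a small but genuine addition the paper leaves implicit.
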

\begin{proof}
Given an etale covering \(\{p_i:E_i\rightarrow X\}\) and maps \(f_i:E_i\rightarrow Y\) such that \(f_i|E_{ij}=f_{j}|E_{ij}\), there will be a set theoretical map \(f:X\rightarrow Y\) such that for any \(x=p_i(e_i)\) in \(X\), \(f(x)=f_i(e_i)\). It's well defined since pullbacks of \(f_i\)'s coincide. This analysis also implies \(f\) is unique.

\ To prove \(f\) is continuous suppose \(V\subseteq Y\) is open. Then \(f^{-1}(V)=\cup_ip_i(f_{i}^{-1}(V))\) is open since an etale map (local homeomorphism) is an open map.
\end{proof}

\begin{definition}\label{2.9}
A moduli functor \(\mathcal{N}:\mathbf{Top}^{op}\rightarrow \mathbf{Sets}\) admits a \textbf{coarse moduli space} \(N\) if there is a natural transformation \(\alpha:\mathcal{N}\rightarrow \mathrm{Hom}_{\mathbf{Top}}(-,N)\) such that 
\item[(1)] \(\alpha(*):\mathcal{N}\xrightarrow{\sim} \mathrm{Hom}_{\mathbf{Top}}(*,N)=N\).
\item[(2)] For any another transformation \(\beta:\mathcal{N}\rightarrow \mathrm{Hom}_{\mathbf{Top}}(-,Y)\) there is a unique natural transformation \(\theta_*:\mathrm{Hom}_{\mathbf{Top}}(-,N)\rightarrow \mathrm{Hom}_{\mathbf{Top}}(-,Y)\) induced by \(\theta:N\rightarrow Y\) satisfying \(\theta_*\circ \alpha=\beta\).
\[\begin{tikzcd}
	{\mathcal{N}} & {\mathrm{Hom}_{\mathbf{Top}}(-,N)} \\
	{\mathrm{Hom}_{\mathbf{Top}}(-,Y)}
	\arrow["\beta"', from=1-1, to=2-1]
	\arrow["\alpha", from=1-1, to=1-2]
	\arrow["{\exists!\ \theta_{*}}", dotted, from=1-2, to=2-1]
\end{tikzcd}\]
\end{definition}

\ From the definition we see coarse moduli spaces are unique.

\begin{theorem}\label{2.10}
The moduli functor \(\mathcal{N}\) classifying non-oriented triangles admits the coarse moduli space \(N\) described in the picture (\ref{N}).
\end{theorem}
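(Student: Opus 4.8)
The plan is to exhibit the natural transformation $\alpha$ of Definition \ref{2.9} explicitly, check condition (1) by hand, and then obtain both the existence and the continuity of the comparison map $\theta$ from a single tautological family over $N$.

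First I would define $\alpha_X:\mathcal N(X)\to \mathrm{Hom}_{\mathbf{Top}}(X,N)$. Given a family $p:\mathcal F\to X$, I cover $X$ by opens $U$ on which the bundle trivialises, $p^{-1}(U)\cong U\times T$; the three vertices then give three continuous sections, and from the distance map $d$ I obtain three continuous edge-length functions on $U$. Their pointwise minimum, median and maximum do not depend on the chosen labelling of the vertices, so they glue to three global continuous functions on $X$; together they define a continuous map $\alpha_X(\mathcal F):X\to N$ sending $x$ to the sorted triple of side lengths of $\mathcal F_x$. Isometric families give the same triple, so $\alpha_X$ is well defined on isomorphism classes, and compatibility with pullbacks (hence naturality) is immediate since $\mathcal N(f)(\mathcal F)_y=\mathcal F_{f(y)}$. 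Condition (1) is then the classical side-side-side statement: over a point, two triangles are related by a (possibly orientation-reversing) isometry iff their sorted side-length triples coincide, and every point of $N$ is realised, so $\alpha(*):\mathcal N(*)\xrightarrow{\sim}N$.

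Next I would verify the universal property in Definition \ref{2.9}(2). Given $\beta:\mathcal N\to \mathrm{Hom}_{\mathbf{Top}}(-,Y)$, the equation $\theta_*\circ\alpha=\beta$ evaluated at the one-point space forces $\theta=\beta(*)\circ\alpha(*)^{-1}$ as a map of sets, which already gives uniqueness. To see this $\theta$ works on an arbitrary $\mathcal F\to X$, I restrict along each inclusion $i_x:*\to X$: naturality of $\alpha$ and $\beta$ gives $\alpha_X(\mathcal F)(x)=\alpha(*)([\mathcal F_x])$ and $\beta_X(\mathcal F)(x)=\beta(*)([\mathcal F_x])$, so $\theta(\alpha_X(\mathcal F)(x))=\beta(*)([\mathcal F_x])=\beta_X(\mathcal F)(x)$, that is $\theta_*\circ\alpha=\beta$ pointwise, hence as maps. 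The one thing this point-by-point argument does not deliver is that $\theta$ is continuous.

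That continuity is the heart of the matter, and I would extract it from a \emph{tautological family} $\widetilde N\subseteq N\times\mathbb R^2$: over $(x,y,z)\in N$ place the triangle with its longest edge $z$ along a fixed segment and the remaining vertex in the upper half-plane, its coordinates depending continuously on $(x,y,z)$. Verifying that $\widetilde N\to N$ is an honest object of $\mathcal N(N)$ — a fibre bundle (local triviality, with attention to the isosceles and equilateral loci), proper because the fibre $\cong S^1$ is compact, and equipped with the restricted Euclidean distance map — is the main technical obstacle, analogous to the universal family of Theorem \ref{2.4}. Granting it, the triple is already sorted, so $\alpha_N(\widetilde N)=\mathrm{id}_N$; specialising $\theta_*\circ\alpha=\beta$ to $\mathcal F=\widetilde N$ then reads $\theta=\theta\circ\alpha_N(\widetilde N)=\beta_N(\widetilde N)\in\mathrm{Hom}_{\mathbf{Top}}(N,Y)$, so $\theta$ is continuous. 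Equivalently, $\beta_N(\widetilde N)(n)=\beta(*)([\widetilde N_n])=\theta(n)$ by naturality. Note that the existence of $\widetilde N$ does not contradict Remark \ref{2.5}: the maps $\alpha_X$ remain non-injective — distinct families such as $\mathcal F,\mathcal G$ there share the same classifying map — so $N$ is coarse but not fine.
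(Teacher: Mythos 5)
Your proof is correct, and its second half --- uniqueness of \(\theta\) forced by evaluation, pointwise verification of \(\theta_*\circ\alpha=\beta\) via naturality along \(x:*\rightarrow X\), and continuity of \(\theta\) extracted from a modular family \(\widetilde{N}\) with \(\alpha_N(\widetilde{N})=\mathrm{id}_N\) --- is exactly the paper's argument. Where you genuinely diverge is in the construction phase. The paper defines \(\alpha\) by factoring through Theorem \ref{2.4}: each family admits an ordering, hence a unique continuous classifying map \(X\rightarrow M\), which one composes with the quotient \(M\rightarrow M/S_3=N\); and it obtains the modular family for free as the restriction \(\widetilde{N}=\widetilde{M}\cap(N\times\mathbb{R}^2)\) of the universal family along the inclusion \(N\subseteq M\) (note \(N\) is literally a subset of the open cone \(M\)). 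You instead build \(\alpha\) intrinsically --- sorted side-length functions, glued from local trivializations because min, median and max are labelling-independent --- and construct \(\widetilde{N}\) from scratch, correctly flagging the verification that it is a fibre bundle (especially near the isosceles locus) as the main technical obstacle. That obstacle is precisely what the paper's route dissolves: pulling back \(\widetilde{M}\) along \(N\hookrightarrow M\) yields an honest family with no new work, and continuity of \(\alpha_X(\mathcal{F})\) is inherited from the fine moduli property of \(M\). What your route buys in exchange is self-containedness: it does not invoke the hypothesis that every family admits a global ordering (your local sorting argument needs none), so it proves a formally slightly stronger statement, at the cost of redoing bundle-theoretic checks that Theorem \ref{2.4} already encapsulates. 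Your closing reconciliation with Remark \ref{2.5} (non-injectivity of \(\alpha_X\) on the families \(\mathcal{F},\mathcal{G}\), so \(N\) is coarse but not fine) is accurate, and the identification of the fibre with a compact curve is immaterial --- properness only needs compactness of the fibres, which holds under either reading of ``triangle.''
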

\begin{proof}
For any concrete (not up to isomorphism) continuous family \(\mathcal{F}\) of triangles over \(X\) since it admits an ordering, it induces a map \(X\rightarrow M\). Composed with \(M\rightarrow M/S_3=N\), the map \(X\rightarrow N\) is well defined for the isomorphism class of \(\mathcal{F}\). This defines a natural transformation \(\alpha:\mathcal{N}\rightarrow \mathrm{Hom}_{\mathbf{Top}}(-,N)\) and clearly \(\alpha(*)\) is a bijection. Via \(\alpha\) the subfamily \(\widetilde{N}=\widetilde{M}\cap (N\times \mathbb{R}^2)\) is sent to the identity map \(\mathrm{id}_N\). Such family over \(N\) is called a \textit{modular family}.

\ Given any other transformation \(\beta:\mathcal{N}\rightarrow \mathrm{Hom}_{\mathbf{Top}}(-,Y)\), if \(\theta_*\) exists then it must send \(\mathrm{id}_N\) to \(\beta(\widetilde{N})\in \mathrm{Hom}_{\mathbf{Top}}(N,Y)\) which means \(\theta=\beta(\widetilde{N})\) by Yoneda's lemma. Therefore we only need to prove \(\beta(\widetilde{N})_*\) making such a diagram in Definition \ref{2.9} commutative.

\ Over the one point space 
\[\begin{tikzcd}
	{\mathcal{N}(*)} & N \\
	Y
	\arrow["\alpha","\cong" below, from=1-1, to=1-2]
	\arrow["\beta"', from=1-1, to=2-1]
	\arrow["\mu", dotted, from=1-2, to=2-1]
\end{tikzcd}\]
since \(\alpha(*)\) is a bijection there exists a unique set theoretical map \(\mu:N\rightarrow Y\) making the diagram commutative. For any pointed space \((X,x)\), it represents a function \(x:*\rightarrow X\).
\[\begin{tikzcd}
	{\mathcal{F}\in\mathcal{N}(X)} & {\mathrm{Hom}_{\mathbf{Top}}(X,N)} \\
	{\mathrm{Hom}_{\mathbf{Top}}(X,Y)} & {\mathcal{N}(*)} & N \\
	& Y
	\arrow["\beta"', from=2-2, to=3-2]
	\arrow[from=1-1, to=2-2]
	\arrow["\beta"', from=1-1, to=2-1]
	\arrow["{x^*}"', from=2-1, to=3-2]
	\arrow["\alpha", from=2-2, to=2-3]
	\arrow["\mu", dotted, from=2-3, to=3-2]
	\arrow["\alpha", from=1-1, to=1-2]
	\arrow["{x^*}", from=1-2, to=2-3]
	\arrow["{\mu_*}"{description, pos=0.7}, dotted, from=1-2, to=2-1]
\end{tikzcd}\]

\ We want to prove \(\mu\circ \alpha(\mathcal{F})=\beta(\mathcal{F})\). But since 
\begin{align*}
    x^*\circ \mu_*\circ \alpha&=\mu\circ x^{*}\circ \alpha\\
    &=\mu\circ \alpha\circ \mathcal{N}(x)\\
    &=\beta\circ \mathcal{N}(x)\\
    &=x^*\circ \beta
\end{align*}
this means \(\mu\circ \alpha(\mathcal{F})\) and \(\beta(\mathcal{F})\) have the same value on \(x\in X\),
\[\begin{tikzcd}
	{*} & X & N & Y
	\arrow["x", from=1-1, to=1-2]
	\arrow["{\alpha(\mathcal{F})}", from=1-2, to=1-3]
	\arrow["\mu", from=1-3, to=1-4]
	\arrow[bend right,"\beta(\mathcal{F})" below, from=1-2, to=1-4]
\end{tikzcd}\]
and we obtain \(\mu\circ \alpha(\mathcal{F})=\beta(\mathcal{F})\). Moreover let \(\mathcal{F}=\widetilde{N}\) and then \(\alpha(\widetilde{N})=\mathrm{id}_N\Rightarrow \mu=\beta(\widetilde{N})\) is continuous.
\end{proof}

\begin{remark}\label{2.11}
\normalfont
From Remark \ref{2.6} and Theorem \ref{2.10} we may guess the moduli functor classifying isosceles triangles admits the coarse moduli space. Its coarse moduli space is actually an open inverval such as \((0,\pi)\). But note that in general modular families are not unique.\footnote{See \cite[Sec. 1.5]{Beh}.}
\end{remark}

\subsection{Quotient Stacks}
In algebraic topology for any group \(G\) there exists its \textit{classifying space} \(K(G,1)=BG\) unique up to homotopy such that there are natural bijections 
\[[X,BG]\xrightarrow{\sim}H^1(X;G)\]
for any CW-complex \(X\) where \([X,BG]\) denote the homotopy classes of continuous maps from \(X\) to \(BG\). The base space \(BG\) admits the \textit{universal covering} \(p:EG\rightarrow BG\). Moreover we have isomorphisms of groups
\[\pi_1(BG,*)\cong \mathrm{Aut}(p)\cong G\]
so that the space \(EG\) carries a free action (on the right) of the group \(G\) by covering transformation. Via this action \(G\) acts transitively on fibers \(p^{-1}(*)\). Then \(p\) is the quotient map by \(G\) and \(BG=EG/G\).

\ In tradition if \(G\) acts on \(X\) (on the left) non-freely, we does not define the quotient space of \(X\) to be \(X/G\) in the usual sense. In fact we define the quotient space \([X/G]\) to be the quotient of \(EG\times X\) by the diagonal action \(g\cdot (y,x)=(y\cdot g^{-1},g\cdot x)\) so that the quotient map \(EG\times X\rightarrow [X/G]\) is a \textit{principal \(G\)-bundle}.
\begin{definition}
Suppose a topological group \(G\) acts trivially on \(X\). A \textbf{principal \(G\)-bundle} or \textbf{\(G\)-torsor} is a continuous map \(p:E\rightarrow X\) where \(E\) is a non-empty \(G\)-space with the multiplication \(\rho:E\rightarrow G\rightarrow X\) such that 
\item[(1)] the diagram 
\[\begin{tikzcd}
E\times G\arrow[r,shift left,"\rho" above]\arrow[r,shift right,"pr_1" below]&E\arrow[r,"p"]&X
\end{tikzcd}\]
is a coequalizer.
\item[(2)] \(p\) is locally trivial i.e. there is an open covering \(\{U_i\rightarrow X\}\) such that 
\[\begin{tikzcd}
	{p^{-1}(U_i)} & {U_i\times G} \\
	& {U_i}
	\arrow["\approx", from=1-1, to=1-2]
	\arrow["{pr_1}", from=1-2, to=2-2]
	\arrow["p"', from=1-1, to=2-2]
\end{tikzcd}\]
\end{definition}

\ A classical theorem of algebraic topology asserts that \(BG\) classifies principal \(G\)-bundle over CW-complexes which means for any principal \(G\)-bundle \(E\) over a CW-complex \(X\) there will exist a map \(X\rightarrow BG\) unique up to homotopy such that \(E\approx X\times_{BG}EG\). That's the main reason why we define the quotient space for a non-free action of \(G\) on \(X\) by \([X/G]\). If \(X=*\) is the one point space, \(*/G=*\) but \([*/G]=BG\) contains the information of principal bundles.

\begin{lemma}\label{2.13}
Any morphism of principal \(G\)-bundles is an isomorphism.
\end{lemma}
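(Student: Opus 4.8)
The plan is to exploit the two defining features of a torsor---fiberwise simple transitivity of the $G$-action and local triviality---in order to upgrade the purely fiberwise bijectivity of a bundle morphism to a genuine homeomorphism. Throughout I take a morphism $f:E\to E'$ of principal $G$-bundles over a fixed base $X$ to mean a continuous $G$-equivariant map with $p'\circ f=p$, so that $f$ restricts on each fiber to a $G$-equivariant map $f_x:E_x\to E'_x$.

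First I would show $f$ is a bijection. By local triviality each fiber $E_x$ is, after choosing a trivialization, identified $G$-equivariantly with $G$ acting on itself by right translation, and so is a nonempty free and transitive $G$-set; the same holds for $E'_x$. A $G$-equivariant map between two copies of $G$ (as right $G$-sets) is necessarily right translation by a fixed group element, hence a bijection. Therefore $f$ is a continuous bijection, and everything reduces to proving that $f^{-1}$ is continuous.

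Next I would localize. Choosing a common trivializing open cover $\{U_i\}$ over which both $E$ and $E'$ restrict to $U_i\times G$ (obtained by refining the two covers coming from local triviality), the map $f$ becomes a $G$-equivariant map $U_i\times G\to U_i\times G$ over $U_i$. Equivariance forces the form $(u,g)\mapsto(u,\phi_i(u)g)$, where $\phi_i(u):=pr_2\, f(u,e)$ is continuous as a composite of continuous maps. Its set-theoretic inverse is $(u,g)\mapsto(u,\phi_i(u)^{-1}g)$, which is continuous because inversion in the topological group $G$ is continuous. Thus $f^{-1}$ is continuous on each $p'^{-1}(U_i)$.

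Finally I would glue. On the overlaps the various local formulas for $f^{-1}$ all coincide with the single set-theoretic inverse of the bijection $f$, so they agree wherever they are both defined and patch to a globally continuous map $E'\to E$, which is a morphism of principal $G$-bundles inverse to $f$. The main obstacle is the middle step: the bijectivity of $f$ is formal, but promoting it to a homeomorphism genuinely requires local triviality together with the continuity of inversion in $G$, and it is precisely here that the torsor structure does the work---a continuous equivariant bijection of fiber bundles need not otherwise be an isomorphism.
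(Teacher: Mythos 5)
Your proposal is correct and follows essentially the same route as the paper: reduce by local triviality to the trivial bundle $U_i\times G$, where equivariance forces $f(u,g)=(u,\phi_i(u)\cdot g)$ with $\phi_i$ continuous, and invert via $(u,g)\mapsto(u,\phi_i(u)^{-1}\cdot g)$ using continuity of inversion in $G$. You merely make explicit two points the paper compresses into its final sentence --- the fiberwise bijectivity of $f$ and the fact that the local inverses agree on overlaps (being restrictions of the one set-theoretic inverse) and hence glue --- which is a welcome elaboration rather than a different argument.
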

\begin{proof}
Given a map \(f:E\rightarrow E'\) of principal \(G\)-bundles, we suppose \(E=E'=X\times G\) is trivial first.
\[\begin{tikzcd}
	{X\times G} & {X\times G} \\
	& X
	\arrow["\approx" below,"f" above, from=1-1, to=1-2]
	\arrow["{pr_1}", from=1-2, to=2-2]
	\arrow["{pr_1}"', from=1-1, to=2-2]
\end{tikzcd}\]
Then \(f(x,g)=(x,u(x)\cdot g)\). Clearly \(u:X\rightarrow G\) is a continuous map. Then the inverse of \(f\) is defined to be \((x,g)\mapsto (x,u(x)^{-1}\cdot g)\). Hence the equivariant map \(f\) is an isomorphism. Since principal \(G\)-bundles are locally trivial, in general \(f\) is an isomorphism.
\end{proof}

\ For any space \(X\) we associate it with the groupoid of principal \(G\)-bundles over \(X\) denoteed by \(\mathcal{P}_GX\). If \(G\) is clear, then it's written as \(\mathcal{P}X\) simply. For any map \(f:Y\rightarrow X\) the functor \(\mathcal{P}(f):\mathcal{P}X\rightarrow \mathcal{P}Y\) is defined by the pullback along \(f\). Then it's especially a pseudo-functor. From Theorem \ref{1.11}, its fibered category \(BG\rightarrow \mathbf{Top}\), where we use the symbol \(BG\) to denote the category \(G\)-torsors in tradition, consists of objects principal \(G\)-bundles \(E\rightarrow X\) and its arrows are commutative diagrams 
\[\begin{tikzcd}
	{E'} & E \\
	{X'} & X
	\arrow[from=1-1, to=2-1]
	\arrow[from=2-1, to=2-2]
	\arrow[from=1-1, to=1-2]
	\arrow[from=1-2, to=2-2]
\end{tikzcd}\]
with an isomorphism \(E'\xrightarrow{\sim} X'\times_X E\). Note that \(E'\rightarrow E\) is a \(G\)-equivariant map.

\begin{theorem}\label{2.14}
The fibered category \(BG\rightarrow \mathbf{Top}\) of \(G\)-torsors is a stack.
\end{theorem}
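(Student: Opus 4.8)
The plan is to verify the two halves of the stack axiom as packaged in Remark \ref{1.17}: first that \(BG\rightarrow \mathbf{Top}\) is a prestack, i.e. the descent functor \(F^{-1}(X)\rightarrow F_{\mathrm{des}}(\{X_i\rightarrow X\})\) is fully faithful, and then that every descent datum is effective. Since every morphism of principal \(G\)-bundles is an isomorphism by Lemma \ref{2.13}, \(BG\) is a groupoid fibration and all morphisms appearing in descent data are automatically isomorphisms, so I only ever need to glue isomorphisms; this matches the remark that for groupoid fibrations the prestack condition just says morphisms form a sheaf.

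For full faithfulness, fix two bundles \(E,E'\) over \(X\) and a covering \(\{\iota_i:X_i\rightarrow X\}\). A morphism \(E\rightarrow E'\) over \(X\) is exactly a \(G\)-equivariant continuous map commuting with the projections, so I claim the presheaf \(U\mapsto \mathrm{Hom}_{\mathcal{P}U}(E|_U,E'|_U)\) is a sheaf. Given compatible morphisms \(f_i:E|_{X_i}\rightarrow E'|_{X_i}\) agreeing on each \(X_{ij}\), I first glue the underlying continuous maps using Lemma \ref{2.8} (continuous maps form a sheaf for the étale, hence also the usual, topology), obtaining a unique continuous \(f:E\rightarrow E'\); uniqueness already yields faithfulness. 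That \(f\) is \(G\)-equivariant and commutes with the projection to \(X\) are pointwise conditions, and each point of \(E\) lies over some \(X_i\) where \(f\) coincides with the equivariant \(f_i\), so both conditions hold on all of \(E\). Hence \(f\) is a morphism of bundles restricting to the \(f_i\), proving fullness.

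For effectiveness, start from a descent datum \((E_i,\alpha_{ji})\) with the cocycle condition \(\alpha_{jk}\circ\alpha_{ki}=\alpha_{ji}\) over \(X_{ijk}\). I build \(E\) as the quotient of \(\coprod_i E_i\) by the relation that identifies, for each point of \(X_{ij}\), the corresponding fibres of \(E_i\) and \(E_j\) through \(\alpha_{ji}\). Reflexivity comes from \(\alpha_{ii}=\mathrm{id}\) (a consequence of the cocycle identity), symmetry from \(\alpha_{ij}=\alpha_{ji}^{-1}\), and transitivity is precisely the cocycle condition, so \(\sim\) is an equivalence relation and \(E:=\coprod_i E_i/\sim\) carries the quotient topology with a natural projection \(p:E\rightarrow X\). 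Because each \(\alpha_{ji}\) is \(G\)-equivariant, the fibrewise \(G\)-actions on the \(E_i\) descend to a \(G\)-action on \(E\); the canonical maps \(\alpha_i:E|_{X_i}\xrightarrow{\sim}E_i\) are isomorphisms by construction and satisfy \(\alpha_{ji}=(\alpha_j|_{X_{ij}})\circ(\alpha_i|_{X_{ij}})^{-1}\), so the datum is recovered.

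The step I expect to be the main obstacle is confirming that the glued quotient is a bona fide object of \(BG\) in \(\mathbf{Top}\), i.e. that \(p:E\rightarrow X\) is locally trivial and satisfies the coequalizer axiom. The difficulty is purely point-set topological: one must check that the quotient topology on \(\coprod_i E_i/\sim\) restricts on each \(p^{-1}(X_i)\) to the original topology of \(E_i\), so that the local triviality of \(E_i\) transports to \(E\), and for the étale topology one cannot assume the \(\iota_i\) are injective, so the identifications must be organised through the fibre products \(X_{ij}\) rather than honest subspaces. Once local triviality is secured, the coequalizer property and equivariance are local conditions on \(X\) and follow immediately from the corresponding properties of the \(E_i\).
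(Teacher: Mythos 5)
Your proposal is correct and follows essentially the same route as the paper's proof: full faithfulness by gluing the maps \(f_i\) via the sheaf property of representable functors (Lemma \ref{2.8}), and effectiveness via the quotient \(E=\coprod_i E_i/\sim\) with the cocycle condition, using that the \(G\)-torsor axioms are local. Your additional checks (that \(\sim\) is an equivalence relation, that the quotient topology restricts correctly over each \(X_i\)) fill in details the paper leaves implicit, but they do not constitute a different approach.
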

\begin{proof}
Given an open covering \(\{\iota_i:U_i\rightarrow X\}\) and a morphism \(f:(E_i,\alpha_{ji})\rightarrow (E'_i,\beta_{ji})\) of descent datums such that \((E_i,\alpha_{ji})\) and \((E'_i,\beta_{ji})\) are induced by two global principal \(G\)-bundles \(E\), \(E'\) over \(X\) respectively, then \(\alpha_{ji}\) and \(\beta_{ji}\) should be idnrity maps. Since any representable functor over \(\mathbf{Top}\) with the usual topology is a sheaf, from the sheaf axiom \(f_i\)'s can be glued to a unique map \(f:E\rightarrow E'\).

\ Given a descent datum \((E_i,\alpha_{ji})\) over an open covering \(\{\iota_i:U_i\rightarrow X\}\), we define \(E=\coprod E_i/\sim\) where \(e_i\in E_i\sim e_j\in E_j\) if \(\alpha_{ji}(e_i)=e_j\). The induced map \(p:E\rightarrow X\) is a principal \(G\)-bundle since axioms of \(G\)-torsors are local and its restriction on \(U_i\) is actually \(E_i\).
\end{proof}

\ In tradition for a principal \(G\)-bundle \(E\) over \(Y\), \(G\) acts on the right. Now we suppose for a given space \(X\), \(G\) acts on the left. Then a \(G\)-equivariant map \(f:E\rightarrow X\) should satisfy \(f(e\cdot g)=g^{-1}\cdot f(e)\). Actually for the left \(G\)-space \(X\) it has a natural right \(G\)-space structure defined by \(x\cdot g=g^{-1}\cdot x\).

\ For a left \(G\)-space \(X\) the quotient stack \([X/G]\) is defined as follows its objects are principal \(G\)-bundles \(E\rightarrow Y\) with a \(G\)-equivariant map \(E\rightarrow X\) and morphisms are commutative diagrams 
\[\begin{tikzcd}
	{E'} & E \\
	{Y'} & Y
	\arrow[from=1-1, to=2-1]
	\arrow[from=2-1, to=2-2]
	\arrow[from=1-1, to=1-2]
	\arrow[from=1-2, to=2-2]
\end{tikzcd}\]
with an isomorphism \(E'\xrightarrow{\sim} Y'\times_Y E\) such that the composition of \(E'\rightarrow E\rightarrow X\) is just the \(E'\rightarrow X\). Then it's not difficult to see \([X/G]\) is actually a stack and moreover \([*/G]=BG\).

\ As explained before the moduli functor \(\mathcal{N}\) classifying non-oriented triangles is actually a groupoid fibration whose objects are continuous families of non-oriented triangles and morphisms are diagrams 
\[\begin{tikzcd}
	{\mathcal{F}'} & {\mathcal{F}} \\
	{X'} & X
	\arrow[from=1-1, to=2-1]
	\arrow[from=2-1, to=2-2]
	\arrow[from=1-1, to=1-2]
	\arrow[from=1-2, to=2-2]
\end{tikzcd}\]
where \(\mathcal{F}'\rightarrow \mathcal{F}\) induces isometries on fibers so that the natural map \(\mathcal{F}'\rightarrow X'\times_X\mathcal{F}\) is an isomorphism between families of non-oriented triangles.

\begin{theorem}\label{2.15}
The fibered category \(\mathcal{N}\) is equivalent to the quotient stack \([M/S_3]\).
\end{theorem}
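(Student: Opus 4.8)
The plan is to prove the equivalence by exhibiting explicit, mutually quasi-inverse morphisms of fibered categories $\Phi:\mathcal{N}\to[M/S_3]$ and $\Psi:[M/S_3]\to\mathcal{N}$ over $\mathbf{Top}$, rather than by checking essential surjectivity and full faithfulness abstractly. Here $S_3$ acts on $M\subseteq\mathbb{R}^3$ by permuting the three edge-length coordinates, so that $M/S_3=N$ as observed earlier, and the idea throughout is that an oriented triangle is a non-oriented triangle together with a labeling of its three vertices by $A,B,C$, on which $S_3$ acts simply transitively.

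For $\Phi$, given a family of non-oriented triangles $p:\mathcal{F}\to X$ I would form its \emph{bundle of orderings} (frame bundle) $E=\mathrm{Ord}(\mathcal{F})\to X$, whose fiber over $x$ is the set of the six bijective labelings of the three vertices of $\mathcal{F}_x$ by $A,B,C$. The three vertices are intrinsic to the metric space $\mathcal{F}_x$, being recovered as its corners (the points with no locally flat neighborhood), so $\mathrm{Ord}(\mathcal{F}_x)$ is canonically an $S_3$-torsor of six elements regardless of any symmetry of the triangle. Local triviality of the fiber bundle $\mathcal{F}$ gives continuous local choices of the three corners, hence a local trivialization of $E$, making $E\to X$ a principal $S_3$-bundle; sending a labeling to the ordered triple of edge lengths defines an $S_3$-equivariant map $f:E\to M$, continuous because the three distance maps $d'_{A,B},d'_{A,C},d'_{B,C}$ are continuous in a local ordering. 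This yields an object $(E\to X,f)\in[M/S_3](X)$, and since a fiberwise isometry $\mathcal{F}'\to\mathcal{F}$ carries corners to corners and preserves edge lengths, it induces a morphism of frame bundles compatible with the maps to $M$, defining $\Phi$ on morphisms.

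For $\Psi$, given $(E\to X,\ f:E\to M)$, I would pull back the universal oriented family $\widetilde{M}\to M$ of Theorem \ref{2.4} to a family $f^*\widetilde{M}\to E$ of oriented triangles. Using the equivariance $f(e\sigma)=\sigma^{-1}f(e)$, the canonical isometry between $\widetilde{M}_{f(e)}$ and $\widetilde{M}_{\sigma^{-1}f(e)}$ (they share the same multiset of edge lengths) defines an $S_3$-action on $f^*\widetilde{M}$ lying over the free action on $E$ and acting by isometries between fibers, and I set $\mathcal{F}:=(f^*\widetilde{M})/S_3\to E/S_3=X$. Over a trivialization $E|_U\cong U\times S_3$ a section $s:U\to E$ identifies $\mathcal{F}|_U$ with the pullback $(f\circ s)^*\widetilde{M}$, so $\mathcal{F}\to X$ is again a proper fiber bundle whose descended distance makes each fiber isometric to a triangle, giving a non-oriented family; morphisms in $[M/S_3]$ descend likewise.

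For the natural isomorphisms, $\Psi\Phi\cong\mathrm{id}$ holds because quotienting $f^*\widetilde{M}$ re-glues the six labeled copies of $\mathcal{F}_x$ back to $\mathcal{F}_x$, while $\Phi\Psi\cong\mathrm{id}$ holds because each $e\in E_x$ supplies an isometry $\mathcal{F}_x\cong\widetilde{M}_{f(e)}$ and hence a labeling, giving an $S_3$-equivariant map $E\to\mathrm{Ord}(\mathcal{F})$ that intertwines the two maps to $M$ and is an isomorphism of torsors exactly as in Lemma \ref{2.13}. I expect the main obstacle to be the topological bookkeeping rather than the fiberwise bijections: proving that $\mathrm{Ord}(\mathcal{F})$ is genuinely locally trivial, i.e.\ that the three corners and an ordering can be chosen continuously on small opens, which is precisely where the fiber-bundle hypothesis and the required continuity of the distance maps are used, and then verifying that $\Phi$, $\Psi$, and the two natural isomorphisms all commute with pullbacks, so that one obtains an equivalence of fibered categories over $\mathbf{Top}$ and not merely a fiberwise correspondence.
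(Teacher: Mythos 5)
The paper states Theorem \ref{2.15} without any proof, so there is no argument of the author's to compare yours against; judged on its own terms, your architecture is the natural one and is correct in outline. The assignment \(\Phi:\mathcal{F}\mapsto(\mathrm{Ord}(\mathcal{F}),\ \text{edge-length map})\), the descent construction \(\Psi:(E,f)\mapsto(f^*\widetilde{M})/S_3\), and the rigidity fact underlying both natural isomorphisms --- an isometry of a triangle fixing its three labeled vertices is the identity, so the fiberwise comparison isometries are canonical and hence glue --- are exactly the right ingredients, and your use of Lemma \ref{2.13} to upgrade the comparison of torsors \(E\to\mathrm{Ord}(\Psi(E,f))\) to an isomorphism is correct. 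Your sign conventions also match the paper's definition \(f(e\cdot g)=g^{-1}\cdot f(e)\) of equivariance for \([X/G]\).

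There is, however, one genuine gap, at the step you yourself flag but then dispatch too quickly: you write that ``local triviality of the fiber bundle \(\mathcal{F}\) gives continuous local choices of the three corners.'' It does not. A trivialization \(p^{-1}(U)\approx U\times T\) is merely a homeomorphism of the total space, and homeomorphisms do not see corners: under such a chart the corner locus can a priori move discontinuously inside \(T\), so no continuity of corner sections follows from the bundle structure alone. Worse, your appeal to the continuity of \(d'_{A,B},d'_{A,C},d'_{B,C}\) at this point is circular, since those three maps are only defined once an ordering has been chosen; for a bare non-oriented family the only given datum is the continuity of \(d\) on \(\mathcal{F}\times_X\mathcal{F}\). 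What is actually needed is a metric argument: characterize a corner \(c\in\mathcal{F}_x\) as a point that is not metrically between two others, i.e.\ there exist no \(a,b\in\mathcal{F}_x\) with \(a\ne c\ne b\) and \(d(a,c)+d(c,b)=d(a,b)\); then, working in a chart where \(d\) becomes a continuous function on \(U\times T\times T\) with \(T\) compact (this is where properness and the fiber-bundle hypothesis really enter), show the corner locus \(C\subseteq\mathcal{F}\) is closed with discrete three-point fibers separated by a locally uniform distance, and that nearby fibers have corners near corners, so that \(C\to X\) is a three-sheeted covering; local sections of this covering are the local orderings, and only then are the three distance maps defined and continuous, making \(\mathrm{Ord}(\mathcal{F})\to X\) a principal \(S_3\)-bundle and \(f:\mathrm{Ord}(\mathcal{F})\to M\) continuous. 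One further point deserves explicit mention: for essential surjectivity your \(\Psi\) applied to a nontrivial torsor --- e.g.\ the equilateral family over \(S^1\) obtained by gluing \([0,1]\times T\) along a \(3\)-cycle --- produces a family admitting no global ordering, so the theorem is only true if the paper's requirement that a family ``admit an ordering'' is read locally rather than globally; your proof implicitly adopts the local reading, and it should say so.
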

\begin{proof}[Sketch of the proof]
For a space $X$ with an action of the group $G$, its quotient stack $[X/G]$ is actually the coequalizer for 
$$\begin{tikzcd}
	{X\times G} & X & {}
	\arrow["\rho", shift left, from=1-1, to=1-2]
	\arrow["{pr_1}"', shift right, from=1-1, to=1-2]
\end{tikzcd}$$
in the category of stacks. And we just need to notice that $\mathcal{N}$ is the quotient of $\mathcal{M}$ with the action $S_3$ when viewed as stacks.
\end{proof}

\subsection{Deformation Theory for Triangles}
\begin{definition}
Let \((X,x_0)\) be a pointed space and then a \textbf{deformation} of a triangle \(T\) over \((X,x_0)\) is defined to be a pair of maps 
\[T\xrightarrow{i}\mathcal{F}\xrightarrow{f}X\]
such that 
\item[(1)] \(fi(T)=x_0\).
\item[(2)] There exists an open neighborhood \(U\subseteq X\) containing \(x_0\) such that \(f^{-1}(U)\rightarrow U\) is a continuous family of triangles.
\item[(3)] The map \(i:T\rightarrow f^{-1}(x_0)\) is an isometry.

\ Two deformations 
\[T\xrightarrow{i}\mathcal{F}\xrightarrow{f}X,\ \ \ T\xrightarrow{j}\mathcal{G}\xrightarrow{g}X\]
of the triangle \(T\) over \((X,x_0)\) are equivalent if there exist an open neighborhood \(U\subseteq X\) containing \(x_0\) and the following commutative diagram 
\[\begin{tikzcd}
	T & {f^{-1}(U)} \\
	{g^{-1}(U)} & U
	\arrow["g"', from=2-1, to=2-2]
	\arrow["f", from=1-2, to=2-2]
	\arrow["j"', from=1-1, to=2-1]
	\arrow["i", from=1-1, to=1-2]
	\arrow[dotted, from=1-2, to=2-1]
\end{tikzcd}\]
where the diagonal map is an isomorphism between families of triangles.
\end{definition}

\ For every pointed space \((X,x_0)\) we denote the set of equivalent classes of deformations of \(T\) by \(\mathrm{Def}_T(X,x_0)\). Then it's clear for any open neighborhood \(U\subseteq X\) containing \(x_0\), \(\mathrm{Def}_T(X,x_0)=\mathrm{Def}_T(U,x_0)\).

Given a deformation \(\xi:T\xrightarrow{i}\mathcal{F}\xrightarrow{f}X\) for any pointed map \(g:(Y,y_0)\rightarrow (X,x_0)\) the pullback \(g^*\xi\) is defined to be 
\[g^*\xi:T\xrightarrow{(i,y_0)}\mathcal{F}\times_X Y\xrightarrow{pr_2}Y\]
This induces a well-defined morphism \(\mathrm{Def}_T(X,x_0)\rightarrow \mathrm{Def}_T(Y,y_0)\).

\newpage


\begin{thebibliography}{}
\bibitem[Alp]{Alp} Jarod Alper, Notes on Stacks and Moduli, \href{https://sites.math.washington.edu/~jarod/math582C.html}{lecture notes page}

\bibitem[Beh]{Beh} K. Behrend, Introduction to Algebraic Stacks, in the book \textit{Moduli Spaces}, Cambridge University Press, 2013 

\bibitem[Sta]{Sta} Behrend, Conrad, Edidin, Fantechi, Fulton, Göttsche, Kresch, \textit{Algebraic Stacks}, in preparation

\bibitem[Man]{Man} Manetti, Lectures on Deformations of Complex Manifolds, \href{https://arxiv.org/abs/math/0507286}{arXiv:math/0507286}, 2005

\bibitem[Vis]{Vis}Angelo Vistoli, Notes on Grothendieck Topologies, Fibered Categories and Descent Theory, Version of October 2, 2008
\end{thebibliography}
\end{document}